\documentclass[a4paper,12pt]{article}
\usepackage{amsfonts,amssymb}
\usepackage{amsmath}
\usepackage{mathtools}
\usepackage{mathrsfs}
\usepackage{amsthm}
\usepackage{thmtools}
\usepackage{faktor}
\usepackage{float}
\usepackage{physics}
\usepackage{subfig}
\usepackage{url}
\usepackage{authblk}
\usepackage{enumerate}
\usepackage[text={16.1cm,24cm},centering]{geometry} 
\usepackage[hidelinks]{hyperref}
\usepackage{titlesec}
\usepackage{authblk}
\usepackage[english]{babel}
\usepackage[utf8]{inputenc}
\theoremstyle{plain}
\newtheorem{theorem}{Theorem}
\newtheorem{prop}{Proposition}[section]
\newtheorem{corollary}{Corollary}[theorem]
\newtheorem{lemma}[prop]{Lemma}

\newtheorem{fact}[prop]{Fact}

\theoremstyle{definition}

\DeclareMathOperator{\sign}{sign}

\DeclareMathOperator{\col}{color}
\DeclareMathOperator{\flux}{flux}
\DeclareMathOperator{\twist}{Tw}
\newcommand{\D}{\mathcal{D}}
\newcommand{\tl}{\mathbf{t}}
\newcommand{\R}{\mathcal{R}}
\newcommand{\Pl}{\mathcal{P}}
\newcommand{\pl}{\mathbf{p}}

  \newcommand\extrafootertext[1]{%
    \bgroup
    \renewcommand\thefootnote{\fnsymbol{footnote}}%
    \renewcommand\thempfootnote{\fnsymbol{mpfootnote}}%
    \footnotetext[0]{#1}%
    \egroup
}
\setlength{\parskip}{0.5em}

\begin{document}
\title{Domino tilings of three-dimensional cylinders: regularity of hamiltonian disks}

\author{Raphael de Marreiros}

\date{\today}

\affil{\small Departamento de Matemática, Pontifícia Universidade Católica do Rio de Janeiro\\
Rua Marquês de São Vicente, 225, Gávea, Rio de Janeiro, RJ 22451-900, Brazil \\
\url{raphaeldemarreiros@mat.puc-rio.br}}

\maketitle

\begin{abstract}
We consider three-dimensional domino tilings of cylinders $\D \times [0,N] \subset \mathbb{R}^3$, where $\D \subset \mathbb{R}^2$ is a balanced quadriculated disk and $N \in \mathbb{N}$.
A flip is a local move in the space of tilings: two adjacent and parallel dominoes are removed and then placed in a different position.
The twist is a flip invariant that associates an integer number to a domino tiling.
A disk $\D$ is called \emph{regular} if any two tilings of $\D \times [0,N]$ sharing the same twist can be connected through a sequence of flips once extra vertical space is added to the cylinder.
We prove that hamiltonian disks with narrow and small bottlenecks are regular.
In particular, we show that the absence of a bottleneck in a hamiltonian disk implies regularity.
\end{abstract}
\extrafootertext{2020 {\em Mathematics Subject Classification}.
Primary 05B45; Secondary 52C22, 05C70.\\
{\em Keywords and phrases.} Three-dimensional tilings, dominoes, flip.}
\section{Introduction}
Problems involving domino tilings have been widely studied.
In particular, lots of results are known for two-dimensional domino tilings.
In this context, a region is a finite union of unit squares with vertices in $\mathbb{Z}^2$ and a \emph{domino} is a rectangle with sides of length one and two, formed by the union of two adjacent closed unit squares.
Kasteleyn~\cite{Kas61} established a connection between the number of \emph{domino tilings} (i.e., coverings by dominoes with disjoint interiors) of a region and the Pffafian of a skew-symmetric matrix, as a consequence, the exact number of domino tilings of a $m \times n$ rectangle is derived.
Independently, Temperley and Fisher~\cite{TF61} reached the same conclusion
using a different method.
Conway and Lagarias~\cite{CL90} and Thurston~\cite{THR90} used arguments from combinatorial group theory to develop a criterion to decide whether a region admits a tiling.
Cohn, Kenyon and Propp~\cite{CKP01} studied random tilings to investigate properties of a typical domino tiling of a large region.

We are particularly interested in the problem of connectivity of domino tilings via local moves.
A \emph{flip} is a local move involving two dominoes: two adjacent and parallel dominoes are removed and placed in a different position after a rotation of $90^{\circ}$.
The flip connectivity problem consists of characterizing the connected components under flips of the space of domino tilings $\mathcal{T}(\R)$ of a given region $\R$.
For two tilings $\tl_1,\tl_2 \in \mathcal{T}(\R)$ that can be connected by flips, we write $\tl_1\approx\tl_2$.

Thurston~\cite{THR90} proved that any two tilings of a \emph{quadriculated disk} (i.e., a planar region homeomorphic to a closed disk) can be joined by a sequence of flips.
Saldanha, Tomei, Casarin and Romualdo~\cite{STRD95} showed, for planar non-simply-connected regions, the existence of a flip invariant called flux.
Moreover, it is also proved that two domino tilings can be joined by a sequence of flips if and only if they have the same flux.

Domino tilings, along with the questions previously discussed, can be easily generalized to higher dimensions. 
However, the arguments used in two dimensions do not straightforwardly extend to higher dimensions.
Recently, Chandgotia, Sheffield and Wolfram~\cite{CSW23} developed new tools to extend the results in~\cite{CKP01} to 3D domino tilings.
In a different vein, the transition of dimensions can drastically change the a priori expected result.
For instance, Pak and Yang~\cite{PY13} showed that the counting tiling problem is computationally more complex in dimension three.
Similarly, as we shall see, the space of tilings of a contractible 3D region is not necessarily flip connected.

In the last decade, significant progresses have been made on the flip connectivity problem, particularly in regions such as cylinders~\cite{FKMS22, KS22, MS18, Sal22}.
A three-dimensional \emph{cylinder} $\R_N \subset \mathbb{R}^3$ is a region of the form $\D \times [0,N]$ where $\D \subset \mathbb{R}^2$ is a quadriculated disk and $[0,N]$ is an interval with $N \in \mathbb{N}$.
As illustrated in Figure~\ref{fig:drawtil}, we adhere to~\cite{MS18} and draw a tiling of $\R_N$ floor by floor.
The dominoes parallel to the $z$- axis, referred to as \emph{vertical dominoes}, correspond to two unit squares contained in adjacent floors.
The dominoes parallel to either the $x$-axis or the $y$-axis, referred to as \emph{horizontal dominoes}, are represented as planar dominoes.

\begin{figure}[ht]
\centerline{
\includegraphics[width=0.5\textwidth]{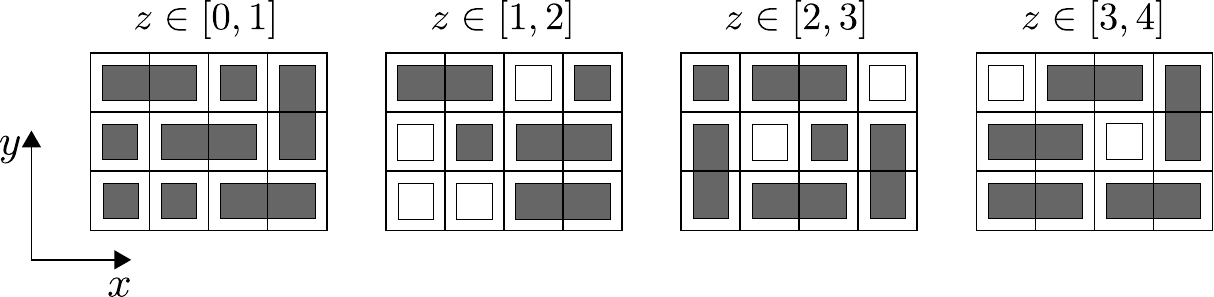}}
\caption{A domino tiling of the cylinder $[0,4]\times [0,3] \times [0,4]$.}
\label{fig:drawtil}
\end{figure}

Normally, we consider cylinders whose base is a balanced nontrivial disk.
A disk $\D$ is \emph{balanced} if it contains an equal number of black and white unit squares; a unit square $[a,a+1]\times[b,b+1] \subset \D$ with $(a,b) \in \mathbb{Z}^2$ is \emph{white} if $a+b$ is even and \emph{black} if $a+b$ is odd.
Moreover, $\D$ is \emph{trivial} if each of its unit squares is adjacent to at most other two unit squares.
It is not difficult to show that any two tilings of $\R_N$ can be joined by a sequence of flips if $\D$ is trivial.
For nontrivial disks, however, the discussion is more complex.

In order to study the flip connectivity of tilings of a cylinder, we consider an equivalence relation that is weaker than the relation $\approx$.
The definition of this relation requires a few concepts.
First, let the \emph{vertical tiling} $\tl_{\text{vert},2N} \in \mathcal{T}(\R_{2N})$ be the tiling formed exclusively by vertical dominoes.
Second, for tilings $\tl_1 \in \mathcal{T}(\R_{N_1})$ and $\tl_2 \in \mathcal{T}(\R_{N_2})$, let the \emph{concatenation} $\tl_1*\tl_2 \in \mathcal{T}(\R_{N_1+N_2})$ be the tiling formed by $\tl_1$ and the translation of $\tl_2$ by $(0,0,N_1)$.
We say that $\tl_1 \sim \tl_2$ if there are $M_1,M_2 \in 2\mathbb{N}$ such that $N_1+M_1=N_2+M_2$ and $\tl_1*\tl_{\text{vert},M_1} \approx \tl_2*\tl_{\text{vert},M_2}$.
Notice that if $\tl_1 \not\sim \tl_2$ then $\tl_1 \not\approx \tl_2$.
On the other hand, there exist tilings equivalent under $\sim$ but not under $\approx$; for an example, see Figure 1 of~\cite{FKMS22}.

The twist of a tiling, as introduced by Milet and Saldanha \cite{MS18}, is an invariant under flips defined for a large class of contractible regions contained in $\mathbb{R}^3$, including cylinders.
In this context, the twist assigns an integer number to each tiling.
The twist is closely related to the Hopf number, an invariant studied in physics whose existence is linked to the nontriviality of the third homotopy group of the sphere $\pi_3(S^2)=\mathbb{Z}$; for details, see Freedman, Hastings, Nayak and Qi~\cite{FHN11} and Bednik~\cite{Bed19}.
A more general definition of the twist, applicable to a broader class of regions, is presented by Freire, Klivans, Saldanha and Milet \cite{FKMS22}, using homology theory.

A balanced nontrivial disk $\D$ is \emph{regular} if any two tilings of $\R_N$ with the same twist can be joined by a sequence of flips once additional vertical space is added; equivalently, any two tilings with the same twist are equivalent under the relation $\sim$.
Saldanha~\cite{Sal22} showed that rectangles $[0,L] \times [0,M]$ with $LM$ even are regular if and only if $\min\{L,M\}>2$.
Furthermore, it is proved in~\cite{Sal21}, for a regular disk $\D$, that the cardinality of the largest connected component under flips of $\R_N$ is $\Theta(N^{-\frac{1}{2}}|\mathcal{T}(\R_N)|)$.

The domino group $G_\D$ and the even domino group $G_{\D}^+$ of a disk $\D$, first defined in~\cite{Sal22}, are fundamental to determine the regularity of $\D$.
As a set, $G_\D$ is defined as the quotient of $\bigcup_{N \geq 1} \mathcal{T}(\R_N)$ by the equivalence relation $\sim$.
The group operation in $G_\D$ is given by the concatenation, the identity element is $\tl_{\text{vert},2}$ and the inverse of a tiling $\tl$ is the tiling $\tl^{-1}$ obtained by reflecting $\tl$ on the $xy$-plane.
The even domino group $G_{\D}^+$ is the subgroup of $G_\D$ consisting of tilings with even height.
Consequently, $G_\D^+$ is a normal subgroup of index two of $G_{\D}$.

Notably, the twist defines a homomorphism $\textsc{TW} \colon G_\D \to \mathbb{Z}$, which maps $G_\D^+$ onto $\mathbb{Z}$ for nontrivial disks, ensuring that $G_\D$ is infinite.
It turns out that a disk $\D$ is regular if and only if the restriction of this homomorphism to $G_\D^+$ is an isomorphism.
In such cases, $G_\D$ is isomorphic to $\mathbb{Z} \oplus \mathbb{Z}/(2)$.

In this paper, we prove that a large class of disks is regular; in particular, the disks in Figures~\ref{fig:exreg} and~\ref{fig:regthm2}.
We show that two properties, both with graph-theoretical interpretations, determine the regularity of a disk.
Notice that there exists a natural identification between a disk $\D$ and a bipartite graph $\mathcal{G}(\D)$.
The vertices of $\mathcal{G}(\D)$ correspond to the unit squares in $\D$, and two vertices are connected by an edge if and only if their corresponding unit squares are adjacent (i.e., share an edge).
For simplicity, we refer to the properties of $\mathcal{G}(\D)$ as properties of $\D$.
In particular, $\D$ is called \emph{hamiltonian} if $\mathcal{G}(\D)$ has a hamiltonian cycle.

We also examine the presence of bottlenecks in $\D$, which are related to vertex cuts of size two of $\mathcal{G}_\D$.
A bottleneck is defined by a domino $d \subset \D$ that \emph{disconnects} $\D$, meaning that $\D \smallsetminus d$ is not connected.
It follows from~\cite{Mar23} that, in many cases, for disks with bottlenecks we have a surjective homomorphism from $G_\D^+$ to the free group of rank two $F_2$.
In such cases, there exists a constant $c \in (0,1)$ such that the cardinality of the largest flip connected component of $\R_N$ is $O(c^N|\mathcal{T}(\R_N)|)$.

Our first result establishes the regularity of bottleneck-free hamiltonian disks.
The second result shows the regularity of a hamiltonian disk obtained by introducing narrow and small bottlenecks into an initially bottleneck-free disk.
The strategy of the proofs is to consider a specific family of tilings that generates the even domino group and show that this family reduces to a single element.

\begin{theorem}\label{thm:regdisks}
Let $\D$ be a nontrivial hamiltonian quadriculated disk.
If for every domino $d \subset \D$ the region $\D \smallsetminus d$ is connected then $\D$ is regular.
\end{theorem}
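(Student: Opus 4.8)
The plan is to follow the strategy of Saldanha's proof for large rectangles, now adapted to an arbitrary hamiltonian disk $\D$ without a disconnecting domino. Recall from the discussion above that $\D$ is regular precisely when the twist homomorphism $\twist \colon G_\D^+ \to \mathbb{Z}$ is an isomorphism; since we already know it is surjective for nontrivial disks, it suffices to prove injectivity, i.e.\ that $G_\D^+$ is generated by a single element $g_0$ with $\twist(g_0) = \pm 1$. Equivalently, one exhibits an explicit finite generating set for $G_\D^+$ (or for $G_\D$) coming from the hamiltonian structure, and then shows by a sequence of relations in the domino group that every generator can be expressed in terms of one distinguished generator.

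First I would fix a hamiltonian cycle $\gamma$ in $\D$ and use it to organize the squares of $\D$ into a cyclic sequence, so that the cylinder $\D \times [0,N]$ acquires a preferred "spiral" structure; along $\gamma$ the disk looks locally like a thin strip, and the relevant piece of the domino group is controlled by the group of the annulus/strip. The hamiltonian cycle gives a natural family of tilings (and of loops in the tiling space) built from a "staircase" running along $\gamma$, together with elementary loops supported near each edge of $\D$ not on $\gamma$ (the chords). The key structural input is that $\D$ has no disconnecting domino: this guarantees that for every such chord the two squares it joins can be linked around through $\gamma$ without creating a bottleneck, which is exactly what is needed to produce, in $G_\D$, a relation trivializing the corresponding generator (up to the distinguished one). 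Concretely I would: (1) write down the generators of $G_\D$ indexed by dominoes of $\D$, as in \cite{Sal}; (2) use flips and the hamiltonian path to show the generators associated to dominoes lying along $\gamma$ are all conjugate/equal to one another and to $g_0$; (3) use the no-bottleneck hypothesis to absorb the remaining generators (those coming from chords) into this single class, by exhibiting explicit local tiling moves that slide a chord-domino along $\gamma$.

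The hard part will be Step (3): handling the chords. For a chord $d$ of $\D$, the two components of $\D\smallsetminus d$ are both nonempty only when $d$ disconnects; since by hypothesis no domino disconnects $\D$, each chord leaves $\D$ connected, but one still must turn this combinatorial fact into an algebraic identity in $G_\D^+$. I expect this to require a careful case analysis of how $\gamma$ passes near $d$ — essentially showing that one can route a flip sequence through $\D\times[0,N]$ (after adding vertical space) that realizes the "jump" across $d$ as a product of staircase moves of total twist zero. This is where the regularity of rectangles enters as a local model: near any bounded portion of $\D$ the tiling space behaves like that of a rectangle $[0,L]\times[0,M]$ with $\min\{L,M\}>2$, so Saldanha's rectangle result can be invoked as a black box on sub-cylinders, and the global argument is the bookkeeping that glues these local trivializations along the hamiltonian cycle.

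Finally, once all generators of $G_\D^+$ are shown to lie in $\langle g_0\rangle$ with $\twist(g_0)=\pm1$, injectivity of $\twist$ on $G_\D^+$ follows, hence $G_\D^+\cong\mathbb{Z}$ and $G_\D\cong \mathbb{Z}\oplus\mathbb{Z}/(2)$, which by the criterion recalled above means $\D$ is regular. I would present the argument so that the bottleneck-free case is the clean special case of the more general Theorem~\ref{thm:regthm2}, isolating the no-bottleneck hypothesis to exactly the point where chord-generators are eliminated.
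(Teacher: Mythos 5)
Your high-level plan coincides with the paper's: use the criterion that regularity is equivalent to $\twist\colon G_\D^+\to\mathbb Z$ being an isomorphism, note that surjectivity is already known for nontrivial disks, and prove injectivity by taking the generating family of $G_\D^+$ attached to a hamiltonian structure (as in \cite{Sal}) and collapsing it to a single generator. However, the proposal has genuine gaps exactly where the work happens, and it misstates the generating set. The generators from \cite{Sal} are not indexed by dominoes alone: they are tilings $\tl_{d,p}$ indexed by a domino $d$ not respecting $\gamma$ \emph{together with a compatible plug} $p$, and the relevant invariant is the integer $\flux_0(d,p)$. A substantial portion of the proof (Lemmas 4.1--4.3) is devoted to showing (i) that $\tl_{d,p}$ depends only on $\flux_0(d,p)$, (ii) that $\flux_0(d,p)=0$ gives the trivial element, and (iii) that generators with $|\flux_0(d,p)|\ge 2$ decompose into generators of strictly smaller flux --- and this last step is already a place where the no-bottleneck hypothesis is essential, since it supplies a domino straddling $\D_{d,0}$ and $\D_{d,\pm 1}$. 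Your sketch never mentions plugs or flux, so this entire reduction is absent.

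Two further points. First, even after all chord-generators with $|\flux_0|=1$ have been identified with one another up to sign (the paper does this via the ``good pair'' Lemma 3.2, proved by induction on $|\D|$ and again using the no-bottleneck hypothesis to make the boundary chords communicate), one must still show that the flux $+1$ and flux $-1$ generators at a fixed domino are mutually inverse; without this you only conclude that $G_\D^+$ is generated by two elements, which does not yet give injectivity of the twist. This is Lemma 4.5, and it is the \emph{only} place where rectangle regularity enters: it is applied to a single $3\times 3$ square at the corner $s_{SW}$ (which exists precisely because there is no disconnecting domino), producing a $3\times3\times4$ box handled by the regularity of the $3\times4$ rectangle. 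Your heuristic that near any bounded portion of $\D$ the tiling space behaves like that of a rectangle with $\min\{L,M\}>2$, so that the rectangle theorem can be invoked as a black box on arbitrary sub-cylinders, is neither justified nor how the local model is actually used. Second, your closing suggestion to obtain this theorem as a special case of the bottleneck result reverses the logical order of the paper: the bottleneck-free case is proved first and then applied to the core disk $\D_0$ to treat disks with narrow bottlenecks.
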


\begin{figure}[H]
\centerline{
\includegraphics[width=0.65\textwidth]{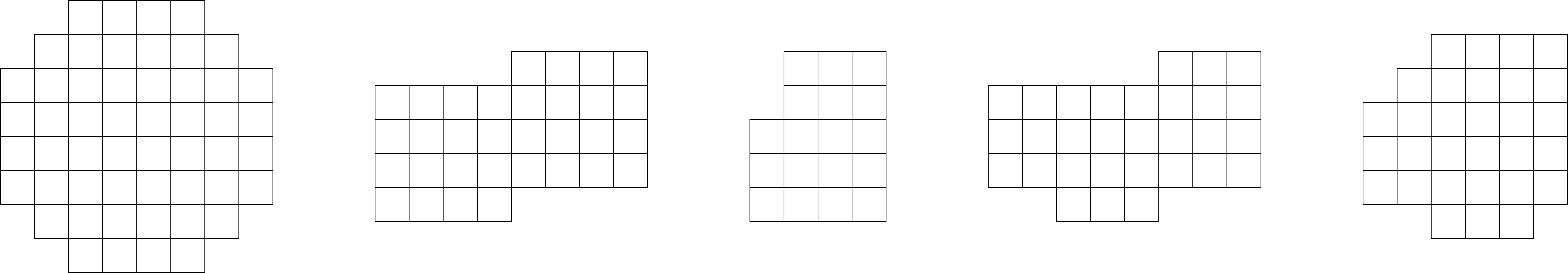}}
\caption{Examples of disks whose regularity follows from Theorem~\ref{thm:regdisks}.}
\label{fig:exreg}
\end{figure}

\begin{theorem}\label{thm:bottleneck}
Let $\D_0$ be a disk satisfying the hypothesis of Theorem~\ref{thm:regdisks}.
Consider pairwise disjoint disks $\D_1,\ldots, \D_k$ such that $|\D_i| < |\D_0| - 2$ and $\D_i \cap \D_0$ is a line segment of length two.
If $\D = \bigcup_{i=0}^k \D_i$ is a hamiltonian disk then $\D$ is regular.
\end{theorem}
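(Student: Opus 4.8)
The plan is to follow the strategy of Theorem~\ref{thm:regdisks}, which itself follows \cite{Sal}. Recall that $\D$ is regular if and only if the twist homomorphism $t\colon G_\D^+\to\mathbb{Z}$ is an isomorphism; since $\D$ is nontrivial (it contains the nontrivial disk $\D_0$), $t$ is already known to be surjective, so it suffices to prove that $t$ is injective, equivalently to produce a generating family of $G_\D^+$ and reduce it to a single element $g$ with $t(g)=\pm1$. As in the proof of Theorem~\ref{thm:regdisks}, a hamiltonian cycle $\gamma$ of $\D$ yields such a generating family $\mathcal{F}=\{\tau_j\}$, with one generator $\tau_j$ attached to each step of $\gamma$; the argument then amounts to checking that $\tau_j=\tau_{j+1}$, up to trivial corrections of twist zero, as $j$ runs around $\gamma$.

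First I would record the structural consequences of the narrow bottlenecks. For each $i\ge1$ the two squares $s_i,s_i'\subset\D_i$ incident to the length-two segment $\D_i\cap\D_0$ are adjacent, as are the two squares $t_i,t_i'\subset\D_0$ incident to it, and $\{t_i,t_i',s_i,s_i'\}$ is a $2\times2$ block; the domino $\{t_i,t_i'\}$ (equivalently $\{s_i,s_i'\}$) disconnects $\D$, separating $\D_i$ from $\D\smallsetminus\D_i$. Since $\gamma$ visits every square of $\D_i$ and may cross the separating segment only along its two unit edges, it crosses exactly twice; hence $\gamma$ restricts to a hamiltonian path of $\D_i$ from $s_i$ to $s_i'$ (so each $\D_i$ is itself hamiltonian), and, after replacing each excursion into a flap by the edge $t_it_i'$, induces a hamiltonian cycle $\gamma_0$ of $\D_0$ through all the dominoes $\{t_i,t_i'\}$. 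This splits $\mathcal{F}$ into a \emph{core part}, the $\tau_j$ for steps of $\gamma$ inside $\D_0$, and, for each $i$, a \emph{flap part} supported on the excursion of $\gamma$ into $\D_i$.

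The core part is controlled by Theorem~\ref{thm:regdisks} applied to the pair $(\D_0,\gamma_0)$: since $\D_0$ is nontrivial, hamiltonian, and has no disconnecting domino, its reduction goes through and collapses the core part to a single generator $g$ with $t(g)=\pm1$. The crux is the flap part. Here I would show that the reduction $\tau_j=\tau_{j+1}$ still goes through along each excursion into $\D_i$, \emph{including} the two steps crossing the bottleneck: the crossing step is admissible precisely because the bottleneck is a \emph{single} domino, so that the $2\times2$ block $\{t_i,t_i',s_i,s_i'\}$ supplies exactly the local room required by the sliding move of \cite{Sal}, while \emph{smallness}, $|\D_i|<|\D_0|-2$ (equivalently $|\D_i|\le|\D_0|-4$ since all disks are balanced), guarantees that a configuration occupying $\D_i$ together with the extra vertical space can be transported through the bottleneck and absorbed inside $\D_0$ without obstruction. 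This is the room that a wide or large flap would lack, and the point at which the construction of \cite{RaphaIrreg} giving a free quotient $G_\D^+\twoheadrightarrow F_2$ fails in our favour. Carrying this out for each $i$ in turn --- the $\D_i$ being pairwise disjoint, so distinct flap parts do not interfere, and one may equally argue by induction on $k$, the case $k=0$ being Theorem~\ref{thm:regdisks} --- reduces every flap part into $\langle g\rangle$.

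Combining the three points, the whole family $\mathcal{F}$ reduces to $g$, so $G_\D^+=\langle g\rangle$ with $t(g)=\pm1$; hence $t$ is an isomorphism and $\D$ is regular. I expect the real difficulty to lie in the flap step: making the ``transport through the bottleneck'' move precise and verifying that $|\D_i|<|\D_0|-2$ is exactly the hypothesis that makes it available. A secondary point is that a flap $\D_i$ may be trivial (say a single domino) or even non-regular in isolation (say a $2\times3$ rectangle), so one must check that the transport argument still applies and that such pathologies of the flaps are cured once they sit inside the regular disk $\D_0$.
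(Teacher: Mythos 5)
Your overall plan coincides with the paper's strategy (induce a cycle $\gamma_0$ on $\D_0$, control the ``core'' via Theorem~\ref{thm:regdisks}, use the smallness of the flaps at the bottleneck), but the two decisive steps are missing, and they are exactly where the work lies. First, the generators $\tl_{d,p}$ whose domino $d$ disconnects $\D$ --- in particular the bottleneck dominoes $d_i\subset\D_0$ containing the segments $\D_0\cap\D_i$, which respect $\gamma_0$ but \emph{not} $\gamma$ and hence genuinely occur in the generating family of $G_\D^+$ --- are outside the reach of the machinery of Theorem~\ref{thm:regdisks}: Lemma~\ref{lem:generatorsfluxgeq2} requires $\D\smallsetminus d$ connected, and the identification of the $+1$ and $-1$ flux generators (Lemma~\ref{lem:inversetilingplug}) requires $\D$ to have no disconnecting domino at all. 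The paper needs a dedicated argument (Lemma~\ref{lem:gendisc}): after pushing $d$ to the bottleneck domino $d_1$, one vertical and one horizontal flip at the adjacent parallel domino $d_2\subset\D_0$ rewrite $\tl_{d_1,p}$ as $\tl_{d_3,p}*\tl_{d_4,p\cup d_3}*\tl_{d_2,p\cup d_1}^{-1}$, whose factors either do not disconnect $\D$ or have $\flux_0=0$ and vanish by Lemma~\ref{lem:tdvazio}; and the hypothesis $|\D_i|<|\D_0|-2$ enters precisely there, through Lemma~\ref{lem:fluxinvariance}, to guarantee a plug with the prescribed $\flux_0$ that avoids both $d_1$ and $d_2$ inside $\D_0$. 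Your ``transport through the bottleneck, the $2\times2$ block supplies the room for the sliding move'' is the right intuition but is not an argument, and you explicitly defer it; it is the heart of the proof, not a detail to be checked.

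Second, even after all generators have $d$ and $p$ inside $\D_0$ (which the paper gets from Lemma~\ref{lem:onegenerator} together with Lemma~\ref{lem:gendisc}), ``applying Theorem~\ref{thm:regdisks} to $(\D_0,\gamma_0)$'' is not automatic: the generators are elements of $G_\D^+$, built from $\gamma$, whereas Theorem~\ref{thm:regdisks} lives in $G_{\D_0}^+$. One must show that $\tl_{d,p;\gamma}$ is $\sim$-equivalent, inside $\D\times[0,\cdot]$, to a tiling that is vertical on $(\D\smallsetminus\D_0)\times[0,\cdot]$, i.e.\ to the extension-by-verticals of a $\D_0$-tiling. This forces a comparison of $\gamma$ with $\gamma_0$: they disagree exactly at the bottleneck dominoes, so decomposing the vertically extended $\tl_{d,p;\gamma_0}$ with respect to $\gamma$ (Facts~\ref{fact: floors} and~\ref{fact:gen}) produces extra factors $\tl_{d_i,p_i;\gamma}$ at the bottlenecks, which are killed only because each flap is balanced, giving $\flux_0(d_i,p_i)=0$, and Lemma~\ref{lem:tdvazio} applies. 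Your sketch treats the core part as automatically controlled and never addresses this transfer. (A smaller inaccuracy: the generating family is indexed by dominoes that do not respect $\gamma$ together with flux data, not by ``steps of $\gamma$'', so the reduction is not of the form $\tau_j=\tau_{j+1}$ around the cycle.) In short: right strategy, but the two steps that make it a proof --- the elimination of generators at disconnecting dominoes and the $\gamma$ versus $\gamma_0$ transfer into $\D_0$ --- are missing.
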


\begin{figure}[H]
\centerline{
\includegraphics[width=0.65\textwidth]{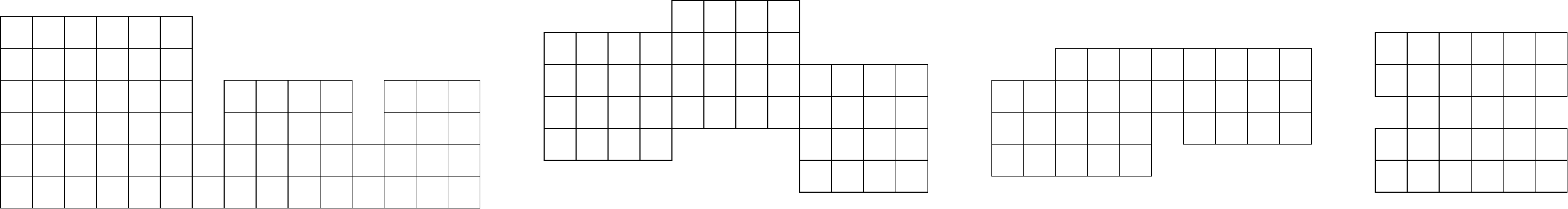}}
\caption{Examples of disks whose regularity follows from Theorem~\ref{thm:bottleneck}.}
\label{fig:regthm2}
\end{figure}

Naturally, Theorem~\ref{thm:regdisks} motivates the question of whether a bottleneck-free disk is hamiltonian, which is equivalent to determining whether a solid grid graph with no vertex cut of size two formed by adjacent vertices is hamiltonian.
Zamfirescu and Zamfirescu~\cite{ZZ92} showed that certain grid graphs with width greater than two are hamiltonian.
Conversely, Keshavarz-Kohjerdi and Bagheri~\cite{KKB23} established conditions that a hamiltonian rectangular truncated grid graph must satisfy.
As a consequence, the first two disks in Figure~\ref{fig:cexreg} are not hamiltonian.
On the other hand, it is easy to obtain examples of disks with small bottlenecks that fail to be hamiltonian, as the last two disks in Figure~\ref{fig:cexreg}.
Although our theorems do not apply directly, the regularity of these four disks can be easily established by combining our results with an additional analysis, as each disk becomes hamiltonian after the removal of certain two unit squares.
On a positive note, the inequality in Theorem~\ref{thm:bottleneck} regarding the size of the pairwise disjoint disks is tight (see Theorem 4 of~\cite{Mar23}).

\begin{figure}[H]
\centerline{
\includegraphics[width=0.5\textwidth]{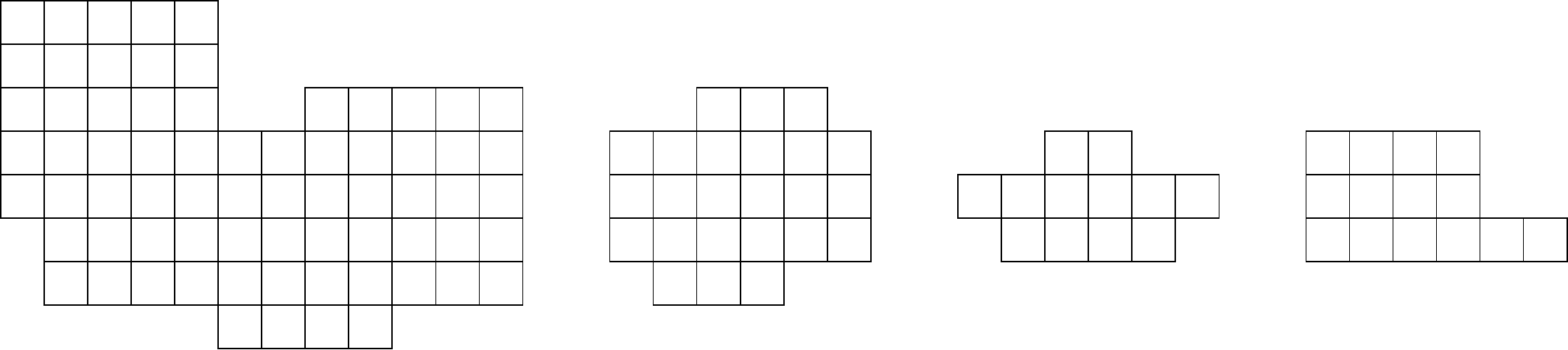}}
\caption{Four regular disks.}
\label{fig:cexreg}
\end{figure}

This paper is structured as follows.
In Section~\ref{sec:definitions} we present the construction of a family of generators for the domino group of a disk.
In Section~\ref{sec:hamiltoniandisks} we show some advantages of working with hamiltonian cycles and establish two technical lemmas.
We prove Theorems~\ref{thm:regdisks} and~\ref{thm:bottleneck} in Section~\ref{sec:thmproof}. 

The author thanks Nicolau Saldanha and Caroline Klivans for providing insightful comments and suggestions.
The author thanks for the support and hospitality of Brown University, where part of this work was completed.
The support of CNPq, FAPERJ, Projeto Arquimedes (PUC-Rio) and CAPES are appreciated.

\section{Definitions}\label{sec:definitions}

In this section, we provide the necessary background to establish Theorems~\ref{thm:regdisks} and~\ref{thm:bottleneck}.
The presented definitions are included to make the text self-contained, a more detailed explanation is available in~\cite{Sal22}.

Throughout this text we routinely abuse notation and neglect boundaries when discussing quadriculated regions.
Given two regions $\R$ and $\widetilde{\R}$, we write $\R \smallsetminus \widetilde{\R}$ for the quadriculated region formed by the closed unit squares that are in $\R$ but not in $\widetilde{\R}$.
Similarly, we say that $\R$ and $\widetilde{\R}$ are disjoint if they do not share any unit square.
In that sense, two unit squares that have only one vertex in common are said to be disjoint.

Henceforth, unless stated otherwise, we assume that all quadriculated disks are nontrivial and balanced.
Let $\D$ be a disk.
A \emph{plug} $p \subset \D$ is balanced subregion of $\D$, i.e., a union of an equal number of white and black closed unit squares in $\D$.
In particular, a domino is a plug.
We highlight the empty plug $\pl_{\circ}=\emptyset$ and the full plug $\pl_{\bullet}=\D$.
Given a plug $p$ we obtain another plug $p^{-1} = \D \smallsetminus p$.
We denote by $|p|$ the number of unit squares in $p$ and by $\Pl$ the set of plugs in $\D$.
A plug is \emph{compatible} with a domino $d \subset \D$ if they are disjoint.
The set of plugs compatible with $d$ is denoted by $\Pl_d$.

Consider $N \geq 2$ and two plugs $p_0, p_N \in \mathcal{P}$.
The \emph{cork} $\R_{0,N;p_0,p_N}$ is defined as:
$$\R_{0,N;p_0,p_N} = (\D \times [1,N-1]) \cup (p_0^{-1} \times [0,1]) \cup (p_N^{-1} \times [N-1,N]).$$
For instance, $\R_{0,N; \pl_{\circ}, \pl_{\circ}} = \R_N$.
A \emph{floor} is a triple $f=(p_1, f^*, p_2)$, where $p_1$ and $p_2$ are two disjoint plugs $f^*$ is a set of planar dominoes that defines a tiling of $\D \smallsetminus (p_1 \cup p_2)$.
Notice that the inverse $f^{-1} = (p_2, f^*, p_1)$ of a floor $f$ is also a floor.
Moreover, a floor $f$ is called \emph{vertical} if $f^* = \emptyset$.

There is an identification between tilings of corks (in particular, cylinders) and sequences of floors.
Indeed, as in Figure~\ref{fig:drawtil}, tilings are essentially drawn by exhibiting their corresponding sequences of floors.
Therefore, a tiling $\tl \in \mathcal{T}(\R_{0,N;p_1,p_2})$ can be described as a concatenation of floors: $\tl = f_1 * f_2* \ldots * f_N$.
Similarly, a tiling can be described as a concatenation of tilings of corks and floors.

An important fact is that pairs of vertical floors can be moved through flips.
For instance, consider a tiling $\tl = \tl_1*\tl_2$ with $\tl_1 \in \mathcal{T}(\R_{0,N_1; \pl_{\circ},p_1})$ and $\tl_2 \in \mathcal{T}(\R_{0,N_2; p_1, \pl_{\circ}})$.
It turns out that $\tl * \tl_{\text{vert},2} \approx \tl_1*\tl_{\text{vert},p_1} * \tl_2$, where $\tl_{\text{vert},p_1}$ is the tiling of $\R_{0,2;p_1,p_1}$ formed only by vertical dominoes (see Lemma 5.2 of~\cite{Sal22}).
Therefore, the relation $\sim$ allows the addition of an arbitrary even number of vertical floors between two floors of a tiling.

Consider a tiling $\tl \in \mathcal{T}(\R_{0,N; p_1, p_2})$.
The inverse of $\tl$ is the tiling $\tl^{-1} \in \mathcal{T}(\R_{0,N; p_2,p_1})$ obtained by reflecting $\tl$ on the $xy$ plane.
In the language of floors, if $\tl= f_1 * f_2 * \ldots * f_N$ then $\tl^{-1} = f_{N}^{-1} * f_{N-1}^{-1} * \ldots * f_1^{-1}$.
It follows from Lemma 4.2 of~\cite{Sal22} that $\tl * \tl^{-1} \sim \tl_{\text{vert}, p_1}$.

The rest of this section is dedicated to the study of the domino group.
First, we consider two large classes of disks.
Secondly, for a disk $\D$ in these classes, we construct a special family of tilings that generates the even domino group $G_{\D}^+$.

Let $\D$ be a disk.
A \emph{path} of length $k$ is characterized by a sequence $\gamma = (s_1, s_2, \ldots, s_k)$ of distinct unit squares in $\D$ such that $s_i$ and $s_{i+1}$ are adjacent for all $i$.
Additionally, $\gamma$ is called a \emph{cycle} if the initial and final squares $s_1$ and $s_{k}$ are also adjacent.
In that sense, a \emph{hamiltonian path} (resp.\ \emph{hamiltonian cycle}) of $\D$ (i.e., of the graph $\mathcal{G}(\D)$) is a path (resp.\ cycle) of length $|\D|$.
Figure~\ref{fig:hamiltoniandisks} shows examples of hamiltonian cycles and paths, their orientation and initial square are indicated by an~arrow.

\begin{figure}[H]
\centerline{
\includegraphics[width=0.55\textwidth]{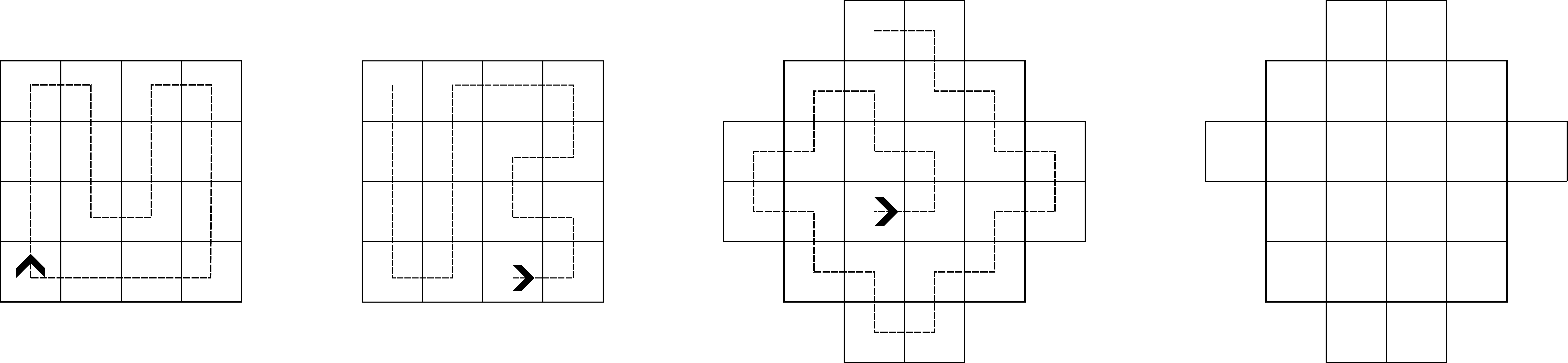}}
\caption{The first and the second example show a hamiltonian cycle and a hamiltonian path in $[0,4]^2$.
The third disk is path-hamiltonian but not hamiltonian.
The fourth disk is neither path-hamiltonian nor hamiltonian.}
\label{fig:hamiltoniandisks}
\end{figure}

From now on, until the end of this section, consider a fixed disk $\D$ with a hamiltonian path $\gamma = (s_1, s_2, \ldots, s_{|\D|})$; the following arguments apply similarly to hamiltonian cycles, as we can transform any cycle into a path by separating its initial and final squares.
We proceed towards the construction of a family of tilings that generates the even domino group $G_\D^+$.
We first define the notion of whether a domino respects $\gamma$.
A domino $d \subset \D$ \emph{respects} $\gamma$ if, for some $i$, it is equal the union of $s_i$ and $s_{i+1}$.
A three-dimensional domino $d \subset \R_N$ \emph{respects} $\gamma$ if its projection on $\D$ is either a unit square or a planar domino that respects $\gamma$.
In view of Fact~\ref{fact:resptil} below, it will be important to consider the planar dominoes in $\D$ that do not respect $\gamma$; the set of such dominoes is denoted by $\D_{\gamma}$. 

\begin{fact}[Lemma 8.1 of~\cite{Sal22}]\label{fact:resptil}
 Let $\D$ be a disk with a hamiltonian path $\gamma=(s_1, \ldots,s_{|\D|})$.
 If $\tl \in \mathcal{T}(\R_{0,2N;p,p})$ is a tiling whose dominoes respect $\gamma$ then $\tl \sim \tl_{\text{vert},p}$.
\end{fact}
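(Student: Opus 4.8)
The plan is to ``straighten out'' the disk along the hamiltonian path $\gamma$, thereby reducing the statement to a purely two-dimensional one that is settled by Thurston's flip-connectivity theorem~\cite{Thur} for simply connected planar regions. List the squares of $\D$ as $s_1,\dots,s_{|\D|}$ in the order in which they occur along $\gamma$, and let the $i$-th unit square $[i-1,i]\times[0,1]$ of the horizontal strip $S=[0,|\D|]\times[0,1]$ stand for $s_i$. Since $s_i$ and $s_{i+1}$ are adjacent in $\D$, a planar domino of $\D$ respects $\gamma$ precisely when it corresponds to an honest horizontal domino of $S$; and because $S$ has width one, the cork $\R_{0,2N;p,p}$ unfolds (discarding the now-trivial $y$-coordinate) to the planar region
$$\R' \;=\; \bigl([0,|\D|]\times[0,2N]\bigr)\ \smallsetminus\ \bigcup_{s_i\subset p}\Bigl(\bigl([i-1,i]\times[0,1]\bigr)\cup\bigl([i-1,i]\times[2N-1,2N]\bigr)\Bigr),$$
that is, a rectangle with a unit notch removed from the top and from the bottom of each column indexed by $p$. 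For $N\ge 2$ the region $\R'$ is comb-shaped, hence simply connected (the top notches lie in $\{z\ge 2N-1\}$ and the bottom notches in $\{z\le 1\}$, so they cannot pinch off a hole); for $N=1$ it is a disjoint union of height-two rectangles, to each of which the argument below is applied separately.

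A tiling $\tl\in\mathcal{T}(\R_{0,2N;p,p})$ all of whose dominoes respect $\gamma$ unfolds, cell by cell, to a planar domino tiling $\tl'$ of $\R'$: a vertical domino over $s_i$ becomes a vertical domino in column $i$, and a ($\gamma$-respecting) horizontal domino $s_i\cup s_{i+1}$ becomes a horizontal domino spanning columns $i$ and $i+1$. The crucial observation is that a flip performed in $\R'$ lifts to a flip in $\R_{0,2N;p,p}$: a $2\times2$ block of $\R'$ occupies columns $i,i+1$ at two consecutive heights and is tiled either by two stacked horizontal dominoes $s_i\cup s_{i+1}$ or by two vertical dominoes over $s_i$ and over $s_{i+1}$; since $s_i\cup s_{i+1}$ is a genuine domino of $\D$, the corresponding $2\times 1\times 2$ box of $\R_{0,2N;p,p}$ admits exactly these two tilings, which differ by a flip. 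Hence $\tl'_1\approx\tl'_2$ in $\R'$ implies $\tl_1\approx\tl_2$ in $\R_{0,2N;p,p}$.

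Each column of $\R'$ has even height ($2N$ or $2N-2$), so $\R'$ admits the all-vertical planar tiling $\tl'_{\text{vert}}$, whose lift is the all-vertical tiling $\tl_0$ of the cork $\R_{0,2N;p,p}$. By Thurston's theorem, $\tl'\approx\tl'_{\text{vert}}$ in the simply connected region $\R'$, and lifting this sequence of flips yields $\tl\approx\tl_0$. Finally, $\tl_0$ is built entirely from vertical floors and differs from $\tl_{\text{vert},p}$ only by the insertion of $2(N-1)$ vertical floors, so the manipulations recalled earlier in this section give $\tl_0\sim\tl_{\text{vert},p}$; combining, $\tl\sim\tl_{\text{vert},p}$, as claimed.

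I expect the only real effort to go into bookkeeping: checking that the unfolding depends only on the linear order of the squares along $\gamma$ rather than on how $\gamma$ winds through $\D$, that $\R'$ is simply connected, that the lift of a planar flip is a legitimate flip of the cork, and that $\tl_0\sim\tl_{\text{vert},p}$. Conceptually there is no obstacle: a $\gamma$-respecting tiling of the cylinder never ``looks sideways'' off the path, so the ambient dimension drops by one and the planar theory finishes the argument.
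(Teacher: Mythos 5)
Your proposal is correct: the unfolding of a $\gamma$-respecting tiling of $\R_{0,2N;p,p}$ into a planar tiling of the notched strip $\R'$ is a genuine bijection, planar flips lift to flips of the cork, $\R'$ is simply connected for $N\ge 2$ (with the $N=1$ degeneration handled componentwise), and the all-vertical lift differs from $\tl_{\text{vert},p}$ only by inserted pairs of vertical floors, which $\sim$ absorbs, so Thurston's theorem~\cite{Thur} indeed finishes the argument. Note that the paper gives no proof of Fact~\ref{fact:resptil} itself — it is quoted as Lemma 8.1 of~\cite{Sal} — and your reduction to the two-dimensional flip-connectivity theorem via the hamiltonian path is exactly the kind of argument that lemma rests on, so there is nothing to add beyond the bookkeeping you already flag.
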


For each plug $p \in \mathcal{P}$, we construct a tiling $\tl_p \in \mathcal{T}(\R_{0,|p|;p,\pl_{\circ}})$ whose dominoes respect~$\gamma$; by convention, $\tl_{\pl_{\circ}} = \emptyset$.
To this end, it suffices to describe the horizontal dominoes contained in each floor of $\tl_p$.
We proceed by induction on $|p|$.
Consider two unit squares of opposite colors $s_i,s_j \subset p$ with $i<j$ and $j-i$ minimal.
If $j=i+1$, the first floor (of $\tl_p$) contains no horizontal dominoes.
Otherwise, the horizontal dominoes of the first floor are obtained by placing dominoes along the path $(s_{i+1},s_{i+2}, \ldots, s_{j-1})$, more precisely, $s_{i+1}\cup s_{i+2}$,  $s_{i+3}\cup s_{i+4}$,$\ldots$, $s_{j-2}\cup s_{j-1}$.
Similarly, the horizontal dominoes of the second floor are obtained by placing dominoes along the path $(s_i, s_{i+1}, \ldots, s_j)$.
The concatenation of these two floors with $\tl_{p \smallsetminus (s_i \cup s_j)}$ (that we already constructed by induction) defines $\tl_p$; see Figure~\ref{fig:tp} for an example.
Notice that different choices of unit squares at minimal distance result in distinct tilings.
However, it follows from Fact~\ref{fact:resptil} that these tilings differ by a sequence of flips.

\begin{figure}[H]
\centerline{
\includegraphics[width=0.75\textwidth]{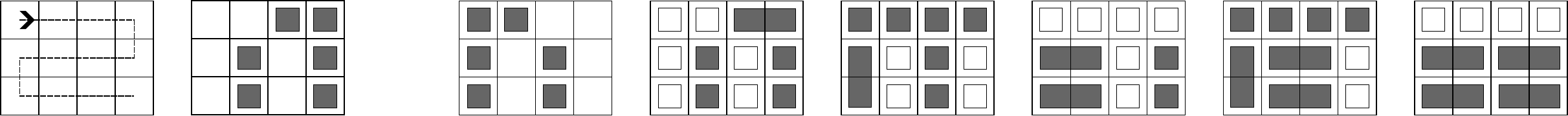}}
\caption{A disk with a hamiltonian path, a plug $p$ and the tiling $\tl_p$.}
\label{fig:tp}
\end{figure}

Consider a floor $f=(p_1,f^*,p_2)$.
Let $f_{\text{vert}}=(p_2, \emptyset, p_2^{-1})$ be a vertical floor.
Then, set $\tl_f= \tl_{p_1}^{-1} * f * f_{\text{vert}} * \tl_{p_2^{-1}} \in \mathcal{T}(\R_{N})$ where $N=|p_1|+|p_2^{-1}|+2$.
Notice that dominoes in $\tl_f$ that do not respect $\gamma$ must be in the floor $f$.

\begin{fact}[Lemma 8.2 of~\cite{Sal22}\protect\footnotemark]\label{fact: floors}
Let $\D$ be a disk with a hamiltonian path $\gamma$.
Consider $N$ even and a tiling $\tl \in \mathcal{T}(\R_{N})$ with floors $f_1, f_2, \ldots, f_N$, so that $\tl = f_1 * f_2* \ldots * f_{N}$.
Then, 
$$\tl \sim \tl_{f_1} * \tl_{f_2^{-1}}^{-1} * \ldots * \tl_{f_i^{(-1)^{(i+1)}}}^{(-1)^{(i+1)}} * \ldots *\tl_{f_N^{-1}}^{-1}.$$
\end{fact}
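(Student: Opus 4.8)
The plan is to prove Fact~\ref{fact: floors} by induction on the number of floors $N$, using Fact~\ref{fact:resptil} as the base mechanism. First I would establish the statement for $N=2$: given $\tl = f_1 * f_2 \in \mathcal{T}(\R_2)$, I want to show $\tl \sim \tl_{f_1} * \tl_{f_2^{-1}}^{-1}$. Unwinding the definition, $\tl_{f_1} = \tl_{p_1}^{-1} * f_1 * (f_1)_{\text{vert}} * \tl_{p_2^{-1}}$ where $f_1=(p_1,f_1^*,p_2)$, and $\tl_{f_2^{-1}}^{-1}$ is the inverse of the analogous expression built from $f_2^{-1}$. Writing $f_2 = (p_2, f_2^*, p_3)$ so that $f_2^{-1} = (p_3, f_2^*, p_2)$, I would carefully expand both tilings into their floor sequences, then cancel the adjacent $\tl_{p}$-blocks and their inverses using the identity $\tl * \tl^{-1} \sim \tl_{\text{vert},p}$ from Lemma 4.2 of~\cite{Sal} (stated in the excerpt). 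The $\sim$ relation also lets me freely insert or delete pairs of vertical floors anywhere, so after these cancellations the remaining dominoes all respect $\gamma$ except those inside $f_1$ and $f_2$, which reassemble into $\tl$ itself up to vertical floors; Fact~\ref{fact:resptil} then absorbs any leftover $\gamma$-respecting blocks between matching plugs.

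For the inductive step, I would split $\tl = f_1 * f_2 * \ldots * f_N$ as $\tl = (f_1 * \ldots * f_{N-2}) * (f_{N-1} * f_N)$, or more conveniently peel off the last two floors and apply the two-floor case together with the induction hypothesis to the first $N-2$ floors — taking care that $N$ even forces $N-2$ even, so the induction stays within the stated hypotheses. The sign bookkeeping is the delicate point: the exponent $(-1)^{i+1}$ alternates the orientation of each $\tl_{f_i}$ block and simultaneously alternates whether we use $f_i$ or $f_i^{-1}$ in its construction, and I must check that concatenating the telescoped product for $f_1 * \ldots * f_{N-2}$ with the two-floor product for $f_{N-1}*f_N$ produces exactly the alternating pattern claimed, with the internal $\tl_{p_{N-2}^{-1}}$ and $\tl_{p_{N-2}}$ blocks at the seam cancelling. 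Again the cancellation is governed by $\tl * \tl^{-1} \sim \tl_{\text{vert},p}$ and by the freedom to add even numbers of vertical floors.

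The main obstacle I expect is precisely this sign-and-orientation bookkeeping at the concatenation seams: one has to be scrupulous about which plug appears at the top versus the bottom of each $\tl_{f_i}^{\pm 1}$ block, since $\tl_f$ is built to go from $p_1$ up to $\pl_\circ$ via an intermediate $\tl_{p_2^{-1}}$, and reversing it flips all of this. A clean way to organize this is to prove a small lemma first: for any floor $f=(p_1,f^*,p_2)$, the block $\tl_f$ has bottom plug $\pl_\circ$ and top plug $\pl_\circ$ (it is a genuine tiling of a cylinder $\R_N$), and its ``net effect'' modulo $\sim$ and modulo $\gamma$-respecting pieces is to contribute exactly $f$ sandwiched between the canonical $\gamma$-respecting tilings $\tl_{p_1}$ and $\tl_{p_2}$. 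With that reformulation in hand, the telescoping becomes transparent: adjacent blocks share a $\tl_{p_i}$ / $\tl_{p_i}^{-1}$ pair that dies by Lemma 4.2 of~\cite{Sal}, and what survives is $f_1 * f_2 * \ldots * f_N = \tl$ up to vertical floors, i.e.\ exactly the asserted $\sim$-equivalence. The only genuinely unavoidable computation is expanding the $N=2$ case by hand, which I would present in the language of floor sequences rather than pictures.
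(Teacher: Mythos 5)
The paper does not prove Fact~\ref{fact: floors}: it is imported wholesale from Lemma~8.2 of~\cite{Sal} (with only the Remark correcting a typo), so there is no in-paper argument to compare against. Measured against the argument in~\cite{Sal}, your core mechanism is the right one and is essentially that proof: writing $f_i=(p_{i-1},f_i^*,p_i)$ with $p_0=p_N=\pl_{\circ}$, each odd-indexed block expands as $\tl_{f_i}=\tl_{p_{i-1}}^{-1}*f_i*(p_i,\emptyset,p_i^{-1})*\tl_{p_i^{-1}}$ and each even-indexed block as $\tl_{f_i^{-1}}^{-1}=\tl_{p_{i-1}^{-1}}^{-1}*(p_{i-1}^{-1},\emptyset,p_{i-1})*f_i*\tl_{p_i}$, so at every seam one sees a pair $\tl_q*\tl_q^{-1}\sim\tl_{\text{vert},q}$ (Lemma~4.2 of~\cite{Sal}) surrounded by vertical floors, which can be deleted in pairs; the product telescopes to $f_1*\cdots*f_N=\tl$. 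This computation also makes transparent why the even-indexed floors must carry \emph{both} the inversion of the floor and the outer exponent $-1$: otherwise the plugs at the seams do not match, which is exactly the typo the Remark corrects.

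The one genuine wrinkle is your induction scheme. The relation $\sim$ and the statement being proved concern tilings of cylinders $\R_N$, but the partial products you propose to induct on are not cylinder tilings: $f_1*\cdots*f_{N-2}$ is a tiling of the cork $\R_{0,N-2;\pl_{\circ},p_{N-2}}$ and $f_{N-1}*f_N$ of $\R_{0,2;p_{N-2},\pl_{\circ}}$, so neither the induction hypothesis nor your $N=2$ case applies to them as stated. You would need to formulate and prove a cork version of the identity to make that induction run. Your third paragraph already contains the correct repair --- abandon the induction and perform the direct telescoping on the full product, which uses only $\tl*\tl^{-1}\sim\tl_{\text{vert},q}$ and the insertion/deletion of pairs of vertical floors --- so I would present that version and drop the inductive framing. (Two small imprecisions to fix along the way: the cancelling pair at an odd-to-even seam is $\tl_{p_i^{-1}}*\tl_{p_i^{-1}}^{-1}$ while at an even-to-odd seam it is $\tl_{p_i}*\tl_{p_i}^{-1}$, not a $\tl_{p^{-1}}$ against a $\tl_{p}$; and $\tl_f$ sandwiches $f$ between $\tl_{p_1}^{-1}$ and $\tl_{p_2^{-1}}$, not between $\tl_{p_1}$ and $\tl_{p_2}$.)
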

\footnotetext{There is a typo in the original result, and the $-1$ superscripts on the floors of even parity are missing.}

We now consider a particular case of the construction above.
Consider a domino $d  \subset \D$ with a compatible plug $p \in \mathcal{P}_d$.
Let $f=(p, d, (p\cup d)^{-1})$ be a floor and set $\tl_{d,p; \gamma}=\tl_f$; when the context is clear we write $\tl_{d,p}$ instead of $\tl_{d,p; \gamma}$.
If $d \not\in \D_{\gamma}$ then Fact~\ref{fact:resptil} implies that $\tl_{d,p} \sim \tl_{\text{vert}}$.
However, if $d \in \D_{\gamma}$ then $d \times [|p|, |p|+1]$ is the only domino in $\tl_{d,p}$ that does not respect $\gamma$; for instance, see Figure~\ref{fig:ex_generator}.

\begin{figure}[H]
\centerline{
\includegraphics[width=0.79\textwidth]{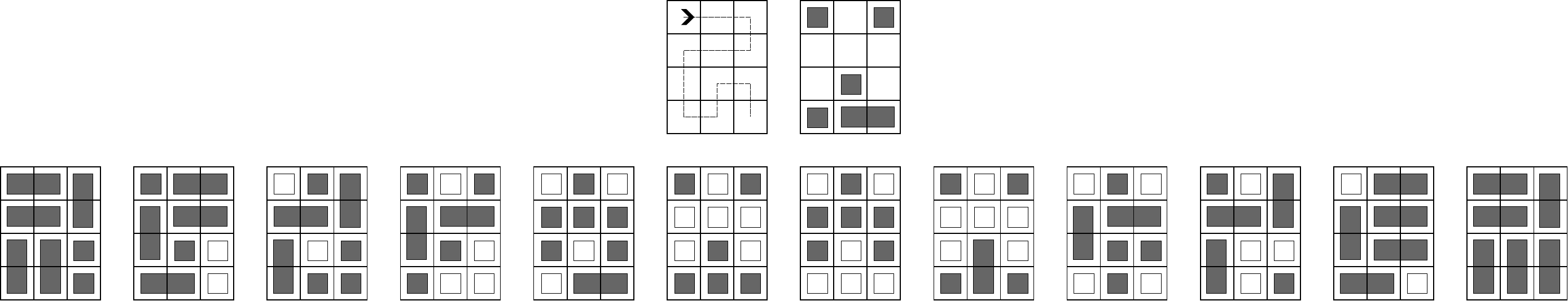}}
\caption{The first row shows the disk $\D=[0,3]\times [0,4]$ with a hamiltonian path $\gamma$, and a domino $d$ with a compatible plug $p$.
The second row shows $\tl_{d,p;\gamma} = \tl_p^{-1} * f * f_{\text{vert}} * \tl_{p \cup d}$.}
\label{fig:ex_generator}
\end{figure}

\begin{fact}[Lemma 8.3 of~\cite{Sal22}]\label{fact:gen}
Let $\D$ be a disk with a hamiltonian path $\gamma$.
Consider a floor $f=(p, f^*, \tilde{p})$.
Suppose that $f^* = \{ d_1, d_2, \ldots, d_k \}$.
Let $p_1 = p$ and $p_{i+1} = p_i \cup d_i$.
Then,
$$ \tl_f \sim \tl_{d_1,p_1} * \ldots * \tl_{d_i,p_i} * \ldots * \tl_{d_k,p_k}.$$
\end{fact}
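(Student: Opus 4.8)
The plan is to argue by induction on $k$, peeling off the last domino $d_k$. When $k=1$, the floor $f=(p,\{d_1\},\tilde p)$ forces $\tilde p=(p\cup d_1)^{-1}=p_2^{-1}$, so $f$ is literally the floor from which $\tl_{d_1,p_1}$ is built and the asserted identity is the definition $\tl_f=\tl_{d_1,p_1}$. For $k\ge 2$ I would set $f'=(p,\{d_1,\dots,d_{k-1}\},\tilde p\cup d_k)$. Since the $d_i$ are pairwise disjoint and disjoint from both $p$ and $\tilde p$, this is a legitimate floor, and the plugs it determines are exactly $p_1,\dots,p_k$ (the first $k$ of the plugs attached to $f$). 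The induction hypothesis then gives $\tl_{f'}\sim\tl_{d_1,p_1}*\dots*\tl_{d_{k-1},p_{k-1}}$, so it suffices to prove $\tl_f\sim\tl_{f'}*\tl_{d_k,p_k}$.

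Next I would unwind the definitions of $\tl_{f'}$ and $\tl_{d_k,p_k}$. Writing $f_k=(p_k,d_k,p_{k+1}^{-1})$ and using the identities $p_{k+1}=\D\smallsetminus\tilde p=\tilde p^{-1}$, $p_k\cup d_k=p_{k+1}$ and $\tilde p\cup d_k=p_k^{-1}$, one obtains the exact equality
\[
\tl_{f'}*\tl_{d_k,p_k}=\tl_{p_1}^{-1}*f'*(p_k^{-1},\emptyset,p_k)*\bigl(\tl_{p_k}*\tl_{p_k}^{-1}\bigr)*f_k*(\tilde p,\emptyset,\tilde p^{-1})*\tl_{\tilde p^{-1}}.
\]
By Lemma~4.2 of~\cite{Sal} the bracketed factor is $\sim\tl_{\text{vert},p_k}$, and since $\sim$ permits inserting or deleting an even number of vertical floors at a junction (Lemma~5.2 of~\cite{Sal}), it may be removed. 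Thus $\tl_{f'}*\tl_{d_k,p_k}\sim\tl_{p_1}^{-1}*A*(\tilde p,\emptyset,\tilde p^{-1})*\tl_{\tilde p^{-1}}$, where $A=f'*(p_k^{-1},\emptyset,p_k)*f_k$ is the three-floor tiling from $p_1$ to $\tilde p$ whose horizontal dominoes are $d_1,\dots,d_{k-1}$ in the bottom floor and $d_k$ in the top floor.

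The heart of the argument is a purely local ``push'' move. Write $d_k=s\cup s'$. I claim $A$ is flip-equivalent to $B:=f*(\tilde p,\emptyset,\tilde p^{-1})*(p_{k+1},\emptyset,p_{k+1}^{-1})$, the three-floor tiling from $p_1$ to $\tilde p$ whose bottom floor is $f$ itself and whose top two floors are vertical. Indeed $A$ and $B$ agree away from $s$ and $s'$: over the $2\times 1\times 3$ box $(s\cup s')\times[0,3]$, the tiling $A$ consists of the vertical dominoes $s\times[0,2]$ and $s'\times[0,2]$ together with the horizontal domino $d_k$ at height $[2,3]$, while $B$ consists of $d_k$ at height $[0,1]$ together with the verticals $s\times[1,3]$ and $s'\times[1,3]$; these are two tilings of the same box and are joined by two flips, none of which touches a domino outside the box. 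Replacing $A$ by $B$ and using $p_{k+1}^{-1}=\tilde p$, $\tilde p^{-1}=p_{k+1}$, the trailing pair of vertical floors $(p_{k+1},\emptyset,p_{k+1}^{-1})*(\tilde p,\emptyset,\tilde p^{-1})$ is a copy of $\tl_{\text{vert},p_{k+1}}$ sitting at the junction with $\tl_{\tilde p^{-1}}=\tl_{p_{k+1}}$. Deleting it leaves exactly $\tl_{p_1}^{-1}*f*(\tilde p,\emptyset,\tilde p^{-1})*\tl_{\tilde p^{-1}}=\tl_f$, which closes the induction.

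I expect the only genuine difficulty to lie in the bookkeeping surrounding the push move: checking that each of the three floors of $A$ and of $B$ is legitimate (its two plugs disjoint, its horizontal dominoes tiling the complement), that the column of dominoes over $d_k$ really looks as described in each case, and that after replacing $A$ by $B$ the leftover vertical floors genuinely reassemble into a $\tl_{\text{vert},p}$ that $\sim$ is allowed to erase. The flips themselves are trivial and nothing is conceptually subtle, but misplacing a plug or an index is easy, so I would nail down the geometry of that column carefully before invoking any local moves.
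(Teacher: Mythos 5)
Your argument is correct; note that the paper itself only quotes this statement as Fact~\ref{fact:gen} from Lemma~8.3 of~\cite{Sal} without reproving it, and your induction (peel off $d_k$, splice in $\tl_{p_k}*\tl_{p_k}^{-1}\sim\tl_{\text{vert},p_k}$, push $d_k$ down two floors by two flips inside the box $d_k\times[0,3]$, then erase the leftover pair of vertical floors) is essentially the standard proof given there. All the plug identities you use ($p_{k+1}=\tilde p^{-1}$, $\tilde p\cup d_k=p_k^{-1}$) check out, and the tilings $A$ and $B$ do agree outside that box, so the local flip argument is sound.
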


Facts~\ref{fact: floors} and \ref{fact:gen} imply that the even domino group $G_{\D}^+$ is generated by the set of tilings of the form $\tl_{d,p}$.
It is possible to reduce this family of generators through the flux between a domino and a plug, which we now define.
In order to define the flux, it is important to distinguish whether a unit square is black or white.
To facilitate this distinction, we utilize the path $\gamma$.
For a unit square $s_i$, define $\col(s_i)=(-1)^i$; then, $s_i$ is identified as white if $\col(s_i)=+1$ and as black if $\col(s_i)=-1$.

Consider a domino $d \in \D_{\gamma}$, so that $d=s_k \cup s_l$ with $l-k>1$.
The domino $d$ decomposes the region $\D \smallsetminus d$ into three subregions:
$$\D_{d,-1} = \bigcup_{i=1}^{k-1} s_i, \quad \quad \D_{d,0} = \bigcup_{i=k+1}^{l-1} s_i, \quad \quad \D_{d,+1} = \bigcup_{i=l+1}^{|\D|} s_i.$$
Clearly, each region $\D_{d,j}$ comes with a hamiltonian path $\gamma_{d,j}$.
The regions $\D_{d,-1}$ and $\D_{d,+1}$ are not necessarily balanced and nonempty.
However, $\D_{d,0}$ is always balanced and nonempty, as $(s_k,s_{k+1}, \ldots, s_l)$ is a cycle.
The union of $\D_{d,-1}$ and $\D_{d,+1}$ is denoted by~$\D_{d, \pm 1}$.

For $p \in \Pl_d$, we define a triple $\flux(d,p)= (\flux_{-1}(d,p), \flux_{0}(d,p), \flux_{+1}(d,p)) \in \mathbb{Z}^3$.
The coordinate $\flux_j(d,p)$ is computed by summing $\col(s_i)=(-1)^i$ over the unit squares $s_i$ contained in $p \cap \D_{d,j}$.
Notice that $\flux_{-1}(d,p) + \flux_{0}(d,p) + \flux_{+1}(d,p)=0$, as $p$ is a balanced subregion of $\D$.
It turns out that if $p_1, p_2 \in \Pl_d$ and $\flux(d,p_1)= \flux(d,p_2)$ then $\tl_{d,p_1} \sim \tl_{d,p_2}$ (see Lemma 8.4 of~\cite{Sal22}).
We then have the following fact. 

\begin{fact}[Corollary 8.6 of~\cite{Sal22}]\label{fact:completefamilytil}
Consider a disk $\D$ with a hamiltonian path $\gamma$.
For each domino $d \in \D_{\gamma}$, let $\Phi_d$ be the set of the possible triples $\flux(d, \cdot)$.
For each $\phi \in \Phi_d$, consider a plug $p_{d, \phi} \in \mathcal{P}_d$ such that $\flux(d,p_{d,\phi}) = \phi$.
The even domino group $G_\D^+$ is generated by the family of tilings $(\tl_{d,p_{d, \phi}})$.
\end{fact}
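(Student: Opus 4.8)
The plan is to derive the corollary by combining two ingredients already available in the text: the fact that $G_\D^+$ is generated by \emph{all} the tilings $\tl_{d,p}$ (the remark following Facts~\ref{fact: floors} and~\ref{fact:gen}), and the flux-invariance of these tilings (Lemma 8.4 of~\cite{Sal}, quoted above). The whole statement is essentially a pruning of an already-established, but highly redundant, generating set.

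First I would recall why $\{\tl_{d,p} : d \subset \D \text{ a domino},\ p \in \Pl_d\}$ generates $G_\D^+$: Fact~\ref{fact: floors} expresses an arbitrary even-height tiling, up to $\sim$, as a concatenation of the tilings $\tl_{f_i^{\pm 1}}^{\pm 1}$ attached to its floors, and Fact~\ref{fact:gen} rewrites each $\tl_f$ as a concatenation of tilings $\tl_{d_j,p_j}$ indexed by the horizontal dominoes $d_j$ of the floor. Next I would discard the generators coming from dominoes that respect $\gamma$: if $d \notin \D_\gamma$, then in the floor $f = (p, \{d\}, (p\cup d)^{-1})$ the dominoes of $f^\ast$ all respect $\gamma$, and by the observation made right after the definition of $\tl_f$ the only dominoes of $\tl_f = \tl_{d,p}$ that could fail to respect $\gamma$ lie in $f$; hence \emph{every} domino of $\tl_{d,p}$ respects $\gamma$, and Fact~\ref{fact:resptil} gives $\tl_{d,p} \sim \tl_{\text{vert}}$, the identity of $G_\D^+$. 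Thus $G_\D^+$ is generated by $\{\tl_{d,p} : d \in \D_\gamma,\ p \in \Pl_d\}$.

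Finally, I would fix $d \in \D_\gamma$ and invoke Lemma 8.4 of~\cite{Sal}: the class of $\tl_{d,p}$ in $G_\D^+$ depends only on $\flux(d,p) \in \mathbb{Z}^3$. Since each coordinate of $\flux(d,p)$ is a signed count of unit squares, the set $\Phi_d$ of realizable flux triples is finite, and choosing one representative plug $p_{d,\phi} \in \Pl_d$ for each $\phi \in \Phi_d$ yields that every $\tl_{d,p}$ with $p \in \Pl_d$ is $\sim$-equivalent to some $\tl_{d,p_{d,\phi}}$. Therefore the family $(\tl_{d,p_{d,\phi}})_{d\in\D_\gamma,\ \phi\in\Phi_d}$ already generates $G_\D^+$, as claimed.

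There is no substantial obstacle here: the statement is a bookkeeping consequence of the lemmas cited verbatim above. The only step needing genuine care is verifying that $d \notin \D_\gamma$ forces $\tl_{d,p}$ to be trivial in $G_\D^+$ — this is where one must use precisely the structure of the construction $\tl_f = \tl_{p_1}^{-1} * f * f_{\text{vert}} * \tl_{p_2^{-1}}$ (so that all non-$\gamma$-respecting dominoes are confined to $f$) together with Fact~\ref{fact:resptil}; everything else is immediate.
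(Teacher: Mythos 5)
Your proposal is correct and follows essentially the same route the paper takes: it combines Facts~\ref{fact: floors} and~\ref{fact:gen} to get the full generating set $\{\tl_{d,p}\}$, discards $d\notin\D_\gamma$ via Fact~\ref{fact:resptil}, and then prunes by flux using Lemma 8.4 of~\cite{Sal}, exactly as sketched in the paragraph preceding the statement (the result itself is imported from~\cite{Sal}). No gaps.
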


\section{Hamiltonian disks}\label{sec:hamiltoniandisks}
In this section, we present the advantages of using hamiltonian disks and establish two technical lemmas.
The following lemma demonstrates that the relative position between two dominoes that do not respect a cycle must satisfy certain properties.

\begin{lemma}\label{lem:cyclelength} 
Let $\D$ be a disk with a hamiltonian cycle $\gamma=(s_1, s_2, \ldots, s_{|\D|})$.
Consider two dominoes $\tilde{d}=s_i \cup s_j$ and $d=s_k \cup s_l$ that do not respect $\gamma$.
Suppose that $l-k \geq 5$.
If $i < k < j < l$ (resp. $k < j < l < i$) then $\max\{ k-i,l-j \} \geq 3$ (resp. $\max\{ j-k,i-l \} \geq 3$).
\end{lemma}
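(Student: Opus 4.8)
The plan is to argue by contradiction, assuming $k-i\le 2$ and $l-j\le 2$. (By reversing the orientation of $\gamma$ and relabelling, the configuration $k<j<l<i$ reduces to $i<k<j<l$, so I treat only the latter.) The first step is a parity observation: $|\D|$ is even because $\D$ is balanced, and consecutive squares of $\gamma$ have opposite colours, so $b-a$ is odd whenever $s_a\cup s_b$ is a domino; hence $j-i$ and $l-k$ are odd, and writing $j-i=(k-i)+(j-k)$ and $l-k=(j-k)+(l-j)$ shows $k-i\equiv l-j\pmod 2$. Thus $(k-i,l-j)=(1,1)$ or $(k-i,l-j)=(2,2)$, and in both cases $l-k=j-i\ge 5$.

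The second step is a planar set-up. Realise $\gamma$ as the simple closed polygonal curve $C$ through the centres $c(s_m)$ of consecutive squares. Since $\D$ is a disk, $C$ bounds a closed topological disk inside the interior of $\D$, so $\mathcal A:=\D\smallsetminus\operatorname{int}(C)$ is a closed annulus with boundary circles $C$ and $\partial\D$. If a domino $d'=s_a\cup s_b$ does not respect $\gamma$ then the unit segment $e_{d'}=[c(s_a),c(s_b)]$ is an edge of the grid meeting $C$ exactly in $c(s_a),c(s_b)$, so, endpoints aside, it lies either in $\operatorname{int}(C)$ or in the interior of $\mathcal A$. As $i<k<j<l$, the points $c(s_i),c(s_k),c(s_j),c(s_l)$ appear in this cyclic order on $C$, so the disjoint segments $e_{\tilde d}$ and $e_d$ have interleaved endpoints on $C$. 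Two disjoint chords of a disk with interleaved endpoints must cross, so $e_{\tilde d},e_d$ are not both inside $C$; capping off $\partial\D$ turns $\mathcal A$ into a disk, so they are not both outside $C$ either. Hence exactly one of $e_{\tilde d},e_d$ lies inside $C$ and the other outside.

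The final step treats the two surviving cases. If $(k-i,l-j)=(1,1)$, then $s_k=s_{i+1}$, $s_l=s_{j+1}$, and the four distinct squares $s_i,s_{i+1},s_{j+1},s_j$, joined by the edges $s_is_j=e_{\tilde d}$, $s_{i+1}s_{j+1}=e_d$, $s_is_{i+1}$ and $s_js_{j+1}$, form a $4$-cycle of the adjacency graph of $\D$, hence a $2\times 2$ block $B$ whose opposite sides are $e_{\tilde d}$ and $e_d$ and which $\gamma$ crosses along its other two sides; in particular $s_{i+1}$ and $s_j$ are opposite corners of $B$. The arc $(s_{i+1},\dots,s_j)$ of $\gamma$, of length $j-i-1\ge 4$, therefore enters $B$ at $s_{i+1}$, leaves the block immediately, and returns only at the opposite corner $s_j$: no such arc of length $4$ exists, and for length $\ge 6$ the arc must wind around one of the remaining corners $s_i,s_{j+1}$, which---because $\gamma$ still has to visit every square of the simply connected disk $\D$ and close up---either exhausts all grid-neighbours of that corner before $\gamma$ can pass through it, or encircles a square of $\D$ unreachable by the rest of $\gamma$; both are impossible. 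Case $(k-i,l-j)=(2,2)$ is the same with a $6$-cycle in place of the $4$-cycle: the edges $s_is_j=e_{\tilde d}$, $s_{i+2}s_{j+2}=e_d$, $s_is_{i+1},s_{i+1}s_{i+2},s_js_{j+1},s_{j+1}s_{j+2}$ close up, so $s_i,s_{i+1},s_{i+2},s_j,s_{j+1},s_{j+2}$ fill a $2\times 3$ rectangle, and the arc $(s_{i+2},\dots,s_j)$ of $\gamma$, now of length $j-i-2\ge 3$, is forced to wind around it, again trapping a corner or isolating an interior square.

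The step I expect to be the main obstacle is this last one---the ``traps a corner / isolates a square'' dichotomy---which is a finite but delicate case analysis of how a hamiltonian cycle of a quadriculated disk can leave and re-enter a small sub-block, and it is exactly here that the hypothesis $l-k\ge 5$ is used: it forces the relevant excursion of $\gamma$ to be long enough to go around the block rather than staying local. By comparison the parity reduction and the inside/outside dichotomy are soft.
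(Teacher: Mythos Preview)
Your parity reduction in step one is correct and matches the paper exactly: $j-i$ and $l-k$ are odd, so $k-i$ and $l-j$ have the same parity, forcing $(k-i,l-j)\in\{(1,1),(2,2)\}$; and in both cases $l-k=j-i\ge 5$. The symmetry reduction of the second configuration to the first is also fine.

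Your step two is correct but orphaned. You prove that exactly one of $e_{\tilde d},e_d$ lies inside $C$ and the other outside, and then never use this fact in step three. Either it should feed into the final contradiction or be dropped.

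The genuine gap is step three, and you have correctly identified it yourself. The assertions ``must wind around one of the remaining corners,'' ``exhausts all grid-neighbours of that corner,'' and ``encircles a square of $\D$ unreachable by the rest of $\gamma$'' are intuitions, not arguments. You have not shown that the arc $(s_{i+1},\dots,s_j)$ actually surrounds one of $s_i,s_{j+1}$, nor made precise which square gets isolated and why. (Your side claim that no length-$4$ arc exists in case $(1,1)$ happens to be true and is checkable by hand, but the passage to length $\ge 6$ is where the reasoning stops being a proof.)

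The paper closes the argument differently and more cleanly. After the same parity reduction it records that the relevant squares fill a $2\times 2$ block in case $(1,1)$ and a $2\times 3$ block in case $(2,2)$, and then lists the finitely many local patterns of $\gamma$ through that block (four in total, up to symmetry). For three of these patterns, the arc $(s_i,s_{i+1},\dots,s_j)$ together with the chord $e_{\tilde d}$ is a Jordan curve that separates $\D$ with $s_1$ on one side and $s_{|\D|}$ on the other, contradicting their adjacency in the cycle. The remaining pattern is excluded by observing that it would force $s_j$ and $s_k$ to be adjacent, which $l-k\ge 5$ forbids (this is exactly where the hypothesis is used).

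What your outline is missing is precisely this Jordan-curve separation: form the closed curve from the arc $(s_i,\dots,s_j)$ and the chord $e_{\tilde d}$, pin down via the small local case analysis where the complementary arc of $\gamma$ sits, and derive that $s_1$ and $s_{|\D|}$ land on opposite sides. Your ``winding/trapping'' picture is a shadow of this argument, but as written it does not yet constitute one.
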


\begin{proof}
Let $i<k<j<l$, the other case is similar.
Suppose, for a contradiction, that $k-i < 3$ and $l-j < 3$.
Since $|\D_{\tilde{d},0}|$ is even and positive, $j-i$ is an odd number; analogously, $l-k$ is odd.
Thus, either $k-i = 1 = l-j$ or $k-i=2=l-j$.
In the first case, $(s_i, s_k, s_l, s_j)$ defines a cycle of length four and therefore corresponds to a $2 \times 2$ square in $\D$.
In the second case, $(s_k,s_{i+1},s_i, s_j,s_{j+1},s_l)$ defines a cycle of length six and therefore corresponds to a $3\times 2$ rectangle in $\D$.
Then, $\gamma$ essentially follows one of the four patterns shown in Figure~\ref{fig:possiblepaths}.

\begin{figure}[H]
\centerline{
\includegraphics[width=0.45\textwidth]{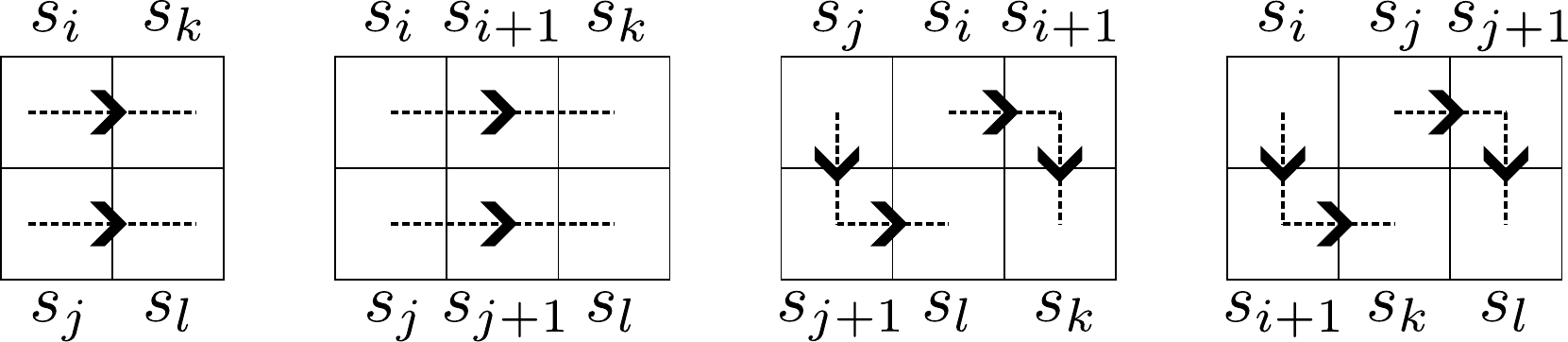}}
\caption{Four possible configurations of $\gamma$.}
\label{fig:possiblepaths}
\end{figure}

However, in any of these four possibilities we have a contradiction with the fact that $s_1$ and $s_{|\D|}$ are adjacent.
Indeed, in the first three cases the cycle $(s_i, \ldots, s_j)$ defines two regions in $\D$ that separate $s_1$ and $s_{|\D|}$.
The fourth case follows by the same argument once we observe that $s_j$ and $s_k$ are not adjacent, as $l-k \geq 5$.
\end{proof}

Let $\D$ be a disk with a hamiltonian cycle $\gamma = (s_1, s_2, \ldots, s_{|\D|})$.
By relabeling the unit squares, we can choose any unit square in $\D$ to be the initial square of $\gamma$.
We will need that a hamiltonian cycle starts at a corner, i.e., a unit square that is adjacent to only other two unit squares.
Let $s_{SW}$ be the southwesternmost unit square in $\D$, i.e.,
\begin{gather*}
s_{SW} = [a,a+1] \times [b,b+1] \text{, where} \\
b=\min \{ y \in \mathbb{Z} \colon (x,y) \in \D \text{ for some } x \in \mathbb{Z} \} \text{ and }a = \min \{ x \in \mathbb{Z} \colon (x,b) \in \D \}
\end{gather*}
Notice that $s_{SW}$ is a corner of $\D$.
Moreover, if $s$ is a unit square adjacent to $s_{SW}$ then $\D \smallsetminus (s\cup s_{SW})$ is a path-hamiltonian disk.

A hamiltonian cycle corresponds to a simple closed curve in $\mathbb{R}^2$ that passes through the center of every unit square in $\D$, as in Figure~\ref{fig:hamiltoniandisks}.
Therefore, $\gamma$ divides the plane into two connected components, one of the components is bounded and the other is unbounded.
The bounded connected component is called the \emph{interior} of $\gamma$, the unbounded component is called the \emph{exterior} of $\gamma$.
We say that a domino $d=s_k \cup s_l$ is contained in the interior (resp.\ exterior) of $\gamma$ if the line segment between the centers of $s_k$ and $s_l$ intersects the interior (resp.\ exterior) of $\gamma$; for instance, see Figure~\ref{fig:domint}.
Notice that, except by $s_1 \cup s_{|\D|}$, every domino that does not respect $\gamma$ is contained in either the interior or the exterior~of~$\gamma$.
This fact help us to prove the following result.

\begin{figure}[H]
\centerline{
\includegraphics[width=0.38\textwidth]{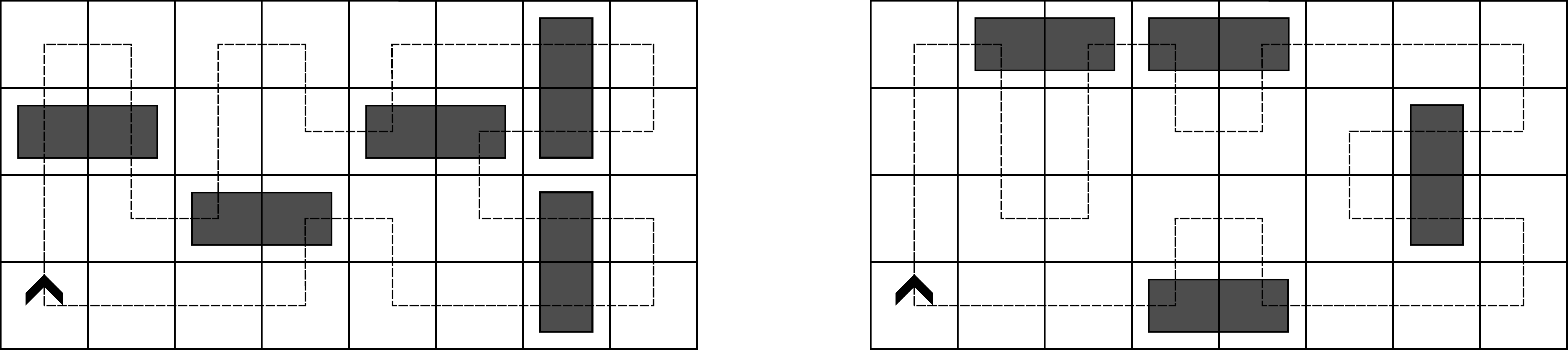}}
\caption{Examples of dominoes in the interior and exterior of a cycle, respectively.}
\label{fig:domint}
\end{figure}

\begin{lemma}\label{lem:alternatingdominoes}
Let $\D$ be a disk with a hamiltonian cycle $\gamma=(s_1,\ldots,s_{|\D|})$ where $s_1=s_{SW}$.
Consider two distinct dominoes $d, \tilde{d} \in \D_{\gamma}$.
There exist dominoes $d_1,d_2,\ldots,d_n$ with $d_1=\tilde{d}$ and $d_n=d$ such that, for each $i \in \{1,2,\ldots,n-1\}$, one of the following~holds:
\begin{enumerate}
    \item $d_{i+1}$ is the union of a unit square in $\D_{d_i,0}$ and a unit square in $\D_{d_i, \pm 1}$.
    
    \item $d_{i+1} \subset \D_{d_i,0}$ and $\D_{d_i,0} \smallsetminus \D_{d_{i+1},0}$ is a disk.

    \item  $d_{i+1} \subset \D_{d_i,\pm 1}$ and $\D_{d_i,\pm 1} \smallsetminus \D_{d_{i+1},\pm 1}$ is a disk.
\end{enumerate}
\end{lemma}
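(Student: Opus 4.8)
The plan is to read the conclusion as a connectivity statement for an auxiliary graph $H$: its vertices are the dominoes of $\D_\gamma$, and $d_i$ is joined to $d_{i+1}$ precisely when one of the three alternatives (1)--(3) holds. I would fix the ``extremal'' domino $d_\star := s_1\cup s_{|\D|}$ --- this is a domino of $\D_\gamma$, since $s_1$ and $s_{|\D|}$ are consecutive on $\gamma$ and $s_1\cup s_{|\D|}$ is not of the form $s_i\cup s_{i+1}$ with $i<|\D|$ --- and show that every $d\in\D_\gamma$ is joined to $d_\star$ in $H$, together with a path back; since $d$ and $\tilde d$ play symmetric roles, this proves the lemma. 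The point of the normalization $s_1=s_{SW}$ is that $s_1$ is a corner, so its only neighbours are $s_2$ and $s_{|\D|}$; hence $d_\star$ is the \emph{only} domino of $\D_\gamma$ containing $s_1$, and consequently $s_1\in\D_{d,\pm1}$ for every $d\in\D_\gamma\smallsetminus\{d_\star\}$. Thus the ``$\pm1$ part'' of every non-extremal domino is anchored at the southwest corner, while $\D_{d_\star,\pm1}=\varnothing$ and $\D_{d_\star,0}=\D\smallsetminus d_\star$ is as large as possible; so $d_\star$ is a natural target to push an arbitrary $d$ towards.

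The first step is a topological dichotomy. Since $\gamma$ is a simple closed curve through the centre of every unit square, each $d=s_k\cup s_l\in\D_\gamma\smallsetminus\{d_\star\}$ lies strictly inside or strictly outside $\gamma$, and $\D_{d,0}\cup d=\bigcup_{i=k}^{l}s_i$ is a hamiltonian sub-disk of $\D$, namely the closed region bounded by the simple closed curve $(s_k,s_{k+1},\ldots,s_l,s_k)$; this in turn makes precise in which cases $\D_{d_i,0}\smallsetminus\D_{d_{i+1},0}$ and $\D_{d_i,\pm1}\smallsetminus\D_{d_{i+1},\pm1}$ are genuine quadriculated regions, and when they are disks. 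Given two dominoes $d,\tilde d\in\D_\gamma$, the four squares involved occur around $\gamma$ either interleaved --- in which case $\tilde d$ already realizes alternative (1) for $d$, so they are adjacent in $H$ --- or nested, with $\tilde d\subset\D_{d,0}$ or $\tilde d\subset\D_{d,\pm1}$. Hence it suffices to treat the nested case; say $\tilde d\subset\D_{d,0}$, the case $\tilde d\subset\D_{d,\pm1}$ being symmetric. I would argue by induction on $|\D_{d,0}|$.

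For the inductive step, if $\D_{d,0}\smallsetminus\D_{\tilde d,0}$ is a disk, then alternative (2) joins $d$ to $\tilde d$ in one step. If it is not --- this is where the work lies --- I would interpose a crossing domino. Writing $A=\D_{d,0}\cup d$, the domino $\tilde d=s_p\cup s_q$ lies strictly inside the hamiltonian sub-disk $A$; I look for a square $s_a$ with $p<a<q$ having a neighbour $s_b\notin A$ (equivalently $s_b\in\D_{d,\pm1}$). Then $d'=s_a\cup s_b\in\D_\gamma$ (the two squares are not consecutive on $\gamma$), and by the choice of $a$ it crosses both $d$ and $\tilde d$, so $d\to d'\to\tilde d$ realizes alternative (1) twice. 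If no such $s_a$ exists, the arc $s_{p+1},\ldots,s_{q-1}$ has all its neighbours inside $A$; by planarity this pins down the shape of $A$ near $\tilde d$ strongly enough that one can choose a domino $d''\subset\D_{d,0}$ with $\tilde d\subset\D_{d'',0}\subsetneq\D_{d,0}$ and $\D_{d,0}\smallsetminus\D_{d'',0}$ a disk, apply alternative (2) to pass $d\to d''$, and invoke the inductive hypothesis on $(d'',\tilde d)$ since $|\D_{d'',0}|<|\D_{d,0}|$. The same moves, used in the enlarging direction (alternatives (2) and (3) to grow the relevant part a step at a time, alternative (1) to switch which part is the ``$0$'' part), connect an arbitrary $d$ to the extremal $d_\star$; finally one glues the interior case, the exterior case, and $d_\star$ together, using Lemma~\ref{lem:cyclelength} to guarantee that the auxiliary dominoes produced along the way lie in $\D_\gamma$ and sit in admissible configurations rather than in forbidden $2\times2$ or $3\times2$ patterns.

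The main obstacle is exactly the disk hypotheses in alternatives (2) and (3): they are real constraints, so one cannot simply shrink $\D_{d,0}$ toward $\tilde d$ a square at a time, and whenever the naive move fails one must produce an available crossing domino to route around the obstruction. Controlling when this is possible requires the planarity of $\D$ and the Jordan-curve structure of $\gamma$ throughout, and it is the part of the argument carrying the delicate case analysis sketched above; the bookkeeping needed to verify that all the interposed dominoes really do satisfy one of (1)--(3) is where most of the technical effort will go.
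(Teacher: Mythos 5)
Your reduction is fine in the easy situations: when the four squares interleave along $\gamma$, alternative (1) applies directly; when $\D_{d,0}\smallsetminus\D_{\tilde d,0}$ is a disk, alternative (2) applies; and your two-step crossing move $d\to d'\to\tilde d$ is correct whenever some $s_a$ with $p<a<q$ has a neighbour outside $A=\D_{d,0}\cup d$. The genuine gap is the remaining case, which is exactly the hard part of the lemma: when $\D_{d,0}\smallsetminus\D_{\tilde d,0}$ is not a disk \emph{and} $\D_{\tilde d,0}$ is insulated from $\D\smallsetminus A$, your entire argument is the assertion that ``by planarity this pins down the shape of $A$ near $\tilde d$ strongly enough that one can choose $d''\subset\D_{d,0}$ with $\tilde d\subset\D_{d'',0}\subsetneq\D_{d,0}$ and $\D_{d,0}\smallsetminus\D_{d'',0}$ a disk.'' No such $d''$ is constructed and no reason for its existence is given; this is precisely the configuration in which $A$ has internal bottlenecks, or the two arcs $\{s_{k+1},\ldots,s_p\}$ and $\{s_q,\ldots,s_{l-1}\}$ wrap around or pinch against $\D_{\tilde d,0}$, and producing a one-step shrinking domino there is not obviously possible. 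The same unproven step is invoked again when you ``grow'' towards the hub $d_{\star}=s_1\cup s_{|\D|}$, and the case $\tilde d\subset\D_{d,\pm 1}$ is not literally symmetric (the region $\D_{d,\pm 1}=\D\smallsetminus A$ need not even be simply connected, and alternatives (2)--(3) are directional, so the ``path back'' to the hub needs its own justification). As it stands, the induction on $|\D_{d,0}|$ does not close.

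For comparison, the paper sidesteps this case analysis by inducting on $|\D|$ with a dichotomy on whether some domino disconnects $\D$. If a disconnecting domino $\hat d$ exists, $\D$ splits into two hamiltonian sub-disks meeting along $\hat d$; the induction hypothesis is applied in each piece, and $\hat d$ is linked to $s_1\cup s_{|\D|}$ by alternative (2) (after reorienting $\gamma$ so that they are disjoint). If no bottleneck exists, every $\hat d\in\D_{\gamma}\smallsetminus\{s_1\cup s_{|\D|}\}$ admits a crossing domino on the other side of $\gamma$ (your alternative (1) move), which reduces the problem to the boundary dominoes $d^1,\ldots,d^j$ in $\partial\D$; these are chained using connectivity of $\D\smallsetminus\partial\D$, with recursion inside the sub-disks $\D_{d^m,0}\cup d^m$. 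The missing step in your proposal is essentially equivalent to this bottleneck analysis, so you should either supply an explicit construction proving your $d''$-existence claim (handling wrapped and pinched configurations) or switch to an induction on $|\D|$ along the paper's lines.
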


\begin{proof}   
Throughout the proof, we say that two dominoes $\bar{d}_1, \bar{d}_2 \in \D_{\gamma}$ form a good pair if the result holds for $d=\bar{d}_1$ and $\tilde{d}=\bar{d}_2$.
We need to show that any two dominoes that do not respect $\gamma$ form a good pair.
The proof is by induction on $|\D|$, with the base case being when $|\D|=4$.
In this case, $\D$ is a $2\times 2$ square and the result follows vacuously.

In order to prove the induction step, we first consider the case that there exists a domino $\hat{d}$ that disconnects $\D$; clearly, $\hat{d} \in \D_{\gamma}$.
By possibly changing the orientation of~$\gamma$ ($s_1=s_{SW}$ still holds) we may assume that $\hat{d}$ and $s_1 \cup s_{|\D|}$ are disjoint.
Notice that $\hat{d}$ defines two other disks $\D_1 = \D \smallsetminus \D_{\hat{d},0}$ and $\D_2 = \D  \smallsetminus \D_{\hat{d},\pm 1}$ such that $\D=\D_1 \cup \D_2$ and $\D_1 \cap \D_2 = \hat{d}$; for instance, see Figure~\ref{fig:altdom0}. 
The cycle $\gamma$ induces hamiltonian cycles $\gamma_1$ in $\D_1$ and $\gamma_2$ in $\D_2$; the union of the initial and final squares of $\gamma_1$ and $\gamma_2$ constitute $s_1\cup s_{|\D|}$ and $\hat{d}$, respectively. 
Hence, by the induction hypothesis, it suffices to show that $s_1\cup s_{|\D|}$ and $\hat{d}$ form a good pair.
On the other hand, since $\hat{d}$ is disjoint from $s_1\cup s_{|\D|}$, we have that $\D_{s_1\cup s_{|\D|},0} \smallsetminus \D_{\hat{d},0} = \D_1 \smallsetminus (s_1 \cup s_{|\D|})$ is a disk.

\begin{figure}[H]
\centerline{
\includegraphics[width=0.5\textwidth]{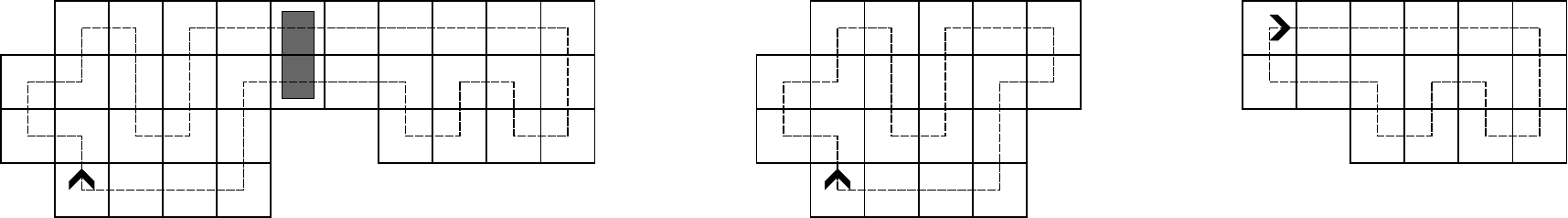}}
\caption{A hamiltonian disk $\D$ with a domino $\hat{d}$ that disconnects $\D$ and the induced hamiltonian disks $\D \smallsetminus \D_{\hat{d},0}$ and $\D \smallsetminus \D_{\hat{d},\pm 1}$.}
\label{fig:altdom0}
\end{figure}

Now, suppose there exists no domino that disconnects $\D$.
Therefore, every domino $\hat{d} \in \D_{\gamma} \smallsetminus \{s_1 \cup s_{|\D|}\}$ forms a good pair with a domino $\bar{d}$ composed of a unit square in $\D_{\hat{d},0}$ and a unit square in $\D_{\hat{d}, \pm 1}$.
Notice that if $\hat{d}$ is contained in the interior of $\gamma$ then $\bar{d}$ is contained in the exterior of $\gamma$, and vice versa.
By possibly altering the orientation of $\gamma$, as in the previous paragraph, it is not difficult to see that the $2\times 2$ square that contains $s_1=s_{SW}$ also contains a domino that forms a good pair with $s_1 \cup s_{|\D|}$.
Thus, the result follows once we prove that any two dominoes in the exterior of $\gamma$ form a good pair.

Let $\partial \D \subset \D$ be the union of the unit squares not contained in the topological interior of~$\D$.
Let $d^1,d^2, \ldots, d^j$ be the dominoes in $\partial \D$ that are distinct from $s_1 \cup s_{|\D|}$ and do not respect~$\gamma$; see Figure~\ref{fig:altdom}.
Clearly, these dominoes are contained in the exterior of $\gamma$.
Notice that every domino in the exterior of $\gamma$ is contained in the pairwise disjoint union $\bigcup\limits_{m=1}^j\D_{d^m,0} \cup d^m$.
Since $\gamma$ induces a hamiltonian cycle in $\D_{d^m,0} \cup d^m$, it follows from the the induction hypothesis that $d^m$ form a good pair with every domino in $\D_{d^m,0}$ that does not respect $\gamma$.
We are left to show that $d^r$ and $d^s$ form a good pair for any $r,s \in \{1,2,\ldots, j\}$.

\begin{figure}[H]
\centerline{
\includegraphics[width=0.45\textwidth]{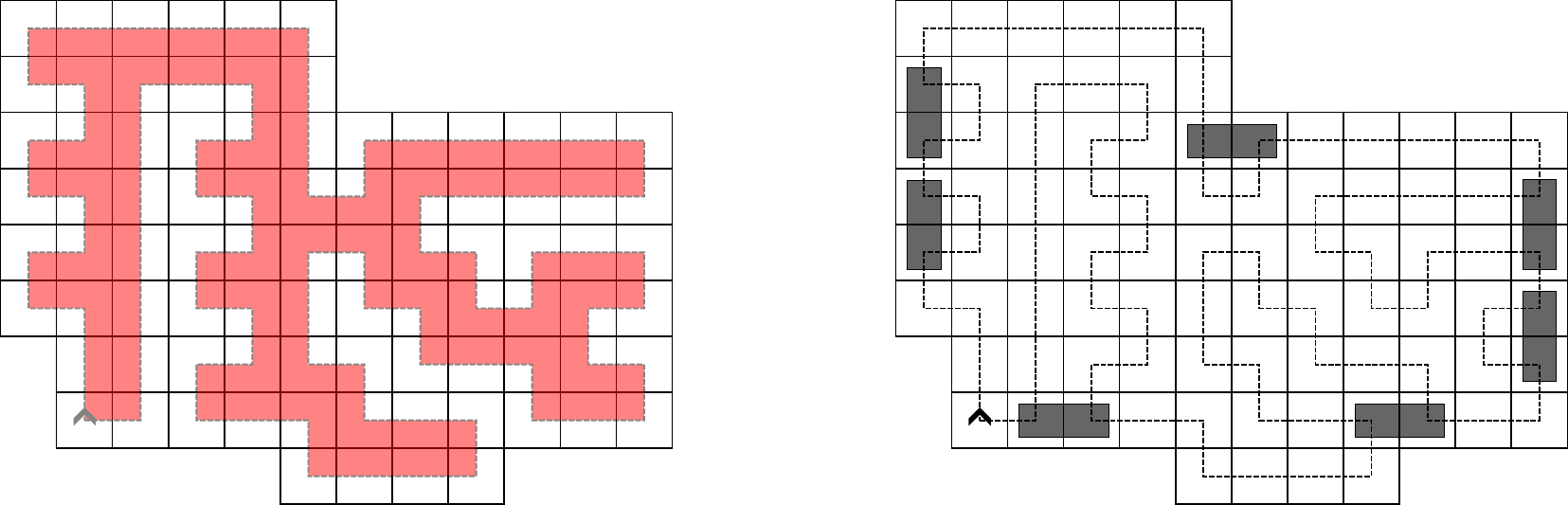}}
\caption{A disk $\D$ with the interior of a hamiltonian cycle shown in red, and the dominoes in $\partial \D$ that are distinct from $s_1 \cup s_{|\D|}$ and do not respect $\gamma$.}
\label{fig:altdom}
\end{figure}

The fact that there is no domino that disconnects $\D$ implies that every non-corner unit square in $\partial \D$ is adjacent to a unit square not in $\partial \D$, so that the region $\D \smallsetminus \partial \D$ is connected.
Therefore, by possibly relabeling the dominoes $d^1,d^2,\ldots,d^j$, we may assume that for each $m$ there is a unit square in $\D_{d^{m+1},0}$ adjacent to a unit square in $\D_{d^{m},0} \subset \D_{d^{m+1}, \pm 1}$.
Hence, $d^m$ and $d^{m+1}$ form a good pair.
\end{proof}

\section{Regularity of disks}\label{sec:thmproof}
In this section, we prove Theorems~\ref{thm:regdisks} and~\ref{thm:bottleneck}.
We demonstrate that if a disk $\D$ with a hamiltonian cycle $\gamma$ satisfies the conditions of these theorems then the even domino group $G_{\D}^+$ is cyclic.
To this end, we reduce the family of generators of $G_{\D}^+$, defined in Section~\ref{sec:definitions}, to only one element.
Recall that this family is composed of tilings $\tl_{d,p}$ where $d$ is a domino that does not respect $\gamma$ and $p$ is a plug compatible with $d$.
The next two results provide an initial reduction of this family of generators.
Indeed, Lemmas~\ref{lem:tdvazio} and~\ref{lem:fluxinvariance} imply that,
instead of considering all possible triples $\flux(d, \cdot)$, it suffices to consider all possible nonzero integers $\flux_0(d, \cdot)$.

\begin{lemma}\label{lem:tdvazio}
Let $\D$ be a disk with a hamiltonian cycle $\gamma=(s_1,\ldots, s_{|\D|})$ where $s_1=s_{SW}$.
Consider a domino $d \in \D_{\gamma}$ and a plug $p \in \mathcal{P}_d$.
If $\flux_0(d,p)=0$ then~$\tl_{d, p} \sim \tl_{\text{vert}}$.
\end{lemma}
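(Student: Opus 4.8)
The goal is to show that if $\flux_0(d,p) = 0$ then $\tl_{d,p} \sim \tl_{\text{vert}}$. Recall that $\tl_{d,p} = \tl_{f}$ for the floor $f = (p, d, (p\cup d)^{-1})$, and that by Fact~\ref{fact:resptil} it would suffice to exhibit a plug $p' \in \mathcal{P}_d$ with $\flux(d,p') = \flux(d,p)$ such that the single non-respecting domino $d\times[|p'|,|p'|+1]$ can be eliminated; equivalently, by Lemma~8.4 of~\cite{Sal} we are free to replace $p$ by any $p' \in \mathcal{P}_d$ with the same flux triple. So the plan is: under the hypothesis $\flux_0(d,p)=0$, construct a convenient representative $p'$ for which $\tl_{d,p'}$ is visibly equivalent to a tiling all of whose dominoes respect $\gamma$, and then invoke Fact~\ref{fact:resptil}.

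Write $d = s_k \cup s_l$ with $l - k > 1$, and recall the decomposition $\D \smallsetminus d = \D_{d,-1} \sqcup \D_{d,0} \sqcup \D_{d,+1}$, with $\D_{d,0}$ always balanced and nonempty. The condition $\flux_0(d,p) = 0$ says that the part of $p$ lying in $\D_{d,0}$ is itself balanced. The key geometric observation is that $\D_{d,0} \cup d$ carries the hamiltonian cycle $(s_k, s_{k+1}, \ldots, s_l)$, so $d$ \emph{respects} this induced cycle — that is, relative to $\D_{d,0}\cup d$, the domino $d$ is just the closing pair of a hamiltonian cycle. The idea is to choose $p'$ so that $p' \cap (\D_{d,0}\cup d)$ is a plug of $\D_{d,0}\cup d$ compatible with $d$, and so that $p' \cap \D_{d,\pm 1} = p \cap \D_{d,\pm 1}$; since $\flux_0(d,p)=0$, such a $p'$ with $\flux(d,p') = \flux(d,p)$ exists (for instance take $p'$ to agree with $p$ outside $\D_{d,0}$ and inside $\D_{d,0}$ to be any balanced subregion giving the right count — the flux coordinates in the $\pm 1$ slots are unchanged and the $0$ slot is $0$ either way). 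Then in $\tl_{d,p'} = \tl_{p'}^{-1} * f * f_{\text{vert}} * \tl_{(p'\cup d)^{-1}}$, the offending domino $d \times [|p'|,|p'|+1]$ sits over the region $\D_{d,0}$, which is tiled on the floors above and below by $\tl$'s built from $\gamma$; by rerouting these finitely many floors using $\sim$ (adding vertical space as needed), one replaces the local picture near $d$ by a tiling of a cork $\R_{0,2M;q,q}$ all of whose dominoes respect $\gamma$, and Fact~\ref{fact:resptil} collapses it to $\tl_{\text{vert},q}$.

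The main obstacle is making precise the last step — that the flip/$\sim$ manipulations really do let us localize and then eliminate the single non-respecting domino. Concretely, one should argue as follows: since $d$ respects the cycle $(s_k,\ldots,s_l)$ of $\D_{d,0}\cup d$, the tiling $\tl_{d,p'}$ restricted to the sub-cylinder $(\D_{d,0}\cup d)\times[0,N']$ is a tiling of a cork of the form $\R_{0,N';q,q}$ whose dominoes all respect $\gamma$ (here $q = p' \cap (\D_{d,0}\cup d)$, which is balanced and compatible with $d$), so Fact~\ref{fact:resptil} gives that this piece is $\sim \tl_{\text{vert},q}$; meanwhile the complementary part over $\D_{d,\pm1}$ has all dominoes respecting $\gamma$ by construction of $\tl_{p'}$. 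Gluing these with the relation $\sim$, which permits inserting vertical floors between floors (Lemma~5.2 of~\cite{Sal}, as recalled in the excerpt), shows $\tl_{d,p'} \sim \tl_{\text{vert},p'}$, hence $\tl_{d,p} = \tl_{d,p'} \sim \tl_{\text{vert}}$. The delicate point is checking that after replacing $p$ by $p'$ the flux triple is genuinely unchanged — this is where $\flux_0(d,p)=0$ is used crucially — and that the sub-cork over $\D_{d,0}\cup d$ really closes up with the \emph{same} plug $q$ on top and bottom, which follows because $d$ is the closing domino of the induced hamiltonian cycle so the construction of $\tl_{p'}$ on that piece is forced to be symmetric.
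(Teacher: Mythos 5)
Your opening move --- using flux--invariance of $\tl_{d,\cdot}$ (Lemma 8.4 of~\cite{Sal}) to normalize the plug, with the aim of reaching a tiling all of whose dominoes respect $\gamma$ and then invoking Fact~\ref{fact:resptil} --- is the right frame, and it is the frame of the paper's proof. The gap is in the mechanism you propose for eliminating the single non-respecting domino $d\times[|p'|,|p'|+1]$: you claim that the restriction of $\tl_{d,p'}$ to the sub-cylinder over $\D_{d,0}\cup d$ is a cork tiling of that sub-disk all of whose dominoes respect its induced hamiltonian structure, and then apply Fact~\ref{fact:resptil} there and ``glue.'' This fails on two counts. First, the restriction need not be a tiling at all: the construction of $\tl_{p'}$ pairs squares of $p'$ at minimal distance along $\gamma$ and lays dominoes along the intervening segment, and such a segment can cross $\partial(\D_{d,0}\cup d)$ --- e.g.\ for $p'=s_a\cup s_b$ with $a<k<l<b$ and $d=s_k\cup s_l$, the dominoes of $\tl_{p'}$ include $s_{k-1}\cup s_k$, which straddles the boundary, so the sub-cylinder is not self-contained. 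Second, even when it is self-contained, the hamiltonian path of $\D_{d,0}\cup d$ with respect to which $d$ is a respecting domino is the re-indexed path $(s_{k+1},\ldots,s_l,s_k)$, and a domino such as $s_k\cup s_{k+1}$ --- which can perfectly well occur in $\tl_{p'}$ or $\tl_{p'\cup d}$ since it respects $\gamma$ --- does \emph{not} respect that re-indexed path, so the hypothesis of Fact~\ref{fact:resptil} for the sub-disk is not met. (There is also the unaddressed point that the $\sim$-equivalence produced by Fact~\ref{fact:resptil} for the sub-disk lives in the sub-disk's own domino group, and transporting its vertical-floor insertions back into the ambient cylinder requires an argument.)

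The paper avoids all of this by normalizing harder: since $\flux_0(d,p)=0$, one may take $p'\cap\D_{d,0}=\emptyset$. Then the two floors $f*f_{\text{vert}}$ of $\tl_{d,p'}$ cover $\D_{d,0}\times[|p'|,|p'|+2]$ purely by vertical dominoes, independently of what $\tl_{p'}$ and $\tl_{p'\cup d}$ do. Vertical flips along $\gamma_{d,0}$ convert that slab to horizontal dominoes, and two-dimensional flip connectivity of the disk $\D_{d,0}\cup d$ (one checks it really is a disk) carries the slab $(\D_{d,0}\cup d)\times[|p'|,|p'|+1]$ to the brick tiling induced by the cycle $(s_k,\ldots,s_l)$, whose dominoes $s_k\cup s_{k+1},\,s_{k+2}\cup s_{k+3},\ldots$ all respect $\gamma$. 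Only then is Fact~\ref{fact:resptil} applied, once, to the whole tiling of $\R_N$. Replacing your restrict-and-glue step by this explicit flip argument would close the gap.
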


\begin{proof}
We may assume $p \cap \D_{d,0} = \emptyset$.
Otherwise, for $\tilde{p} = p\cap \D_{d,\pm 1}$ we have $\tilde{p} \cap \D_{d,0} =\emptyset$ and $\flux(d,p)=\flux(d,\tilde{p})$, consequently $\tl_{d,p}\sim \tl_{d,\tilde{p}}$.
Then, by construction, the tiling $\tl_{d,p}$ covers the region $\D_{d,0} \times [|p|,|p|+2]$ solely with vertical dominoes.
Let $\tl_1$ be the tiling obtained from $\tl_{d,p}$ by performing vertical flips along $\gamma_{d,0}$, so that the restriction of $\tl_1$ to $(\D_{d,0} \cup d) \times [|p|,|p|+1]$ is occupied only by horizontal dominoes.
Notice that $\D_{d,0} \cup d$ is a disk.
Indeed, $\D_{d,0} \cup d$ does not enclose a unit square, as $\D_{d,0} \cup d$ is defined by the cycle $(s_k, s_{k+1}, \ldots, s_l)$ and neither $s_1$ nor $s_{|\D|}$ is contained in the interior of $\D$.
Since the space of tilings of a disk is connected under flips, there exists a sequence of horizontal flips that takes $\tl_1$ to the tiling $\tl_2$ whose restriction to $(\D_{d,0} \cup d )\times [|p|,|p|+1]$ corresponds to the tiling induced by $(s_k, s_{k+1}, \ldots, s_l)$.
Thus, $\tl_{d,p} \approx \tl_1 \approx \tl_2$ and every domino in $\tl_2$ respects~$\gamma$.
The result now follows from Fact~\ref{fact:resptil}.
\end{proof}

\begin{lemma}\label{lem:fluxinvariance}
Let $\D$ be a disk with a hamiltonian cycle $\gamma=(s_1,\ldots, s_{|\D|})$ where $s_1= s_{SW}$.
Consider a domino $d \in \D_{\gamma}$ and $p_1,p_2 \in \mathcal{P}_d$.
If $\flux_0(d,p_1)=\flux_0(d,p_2)$ then $\tl_{d,p_1} \sim \tl_{d,p_2}$.
\end{lemma}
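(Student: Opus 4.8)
The plan is to reduce $\tl_{d,p_1} \sim \tl_{d,p_2}$ to the already-known fact (Lemma 8.4 of~\cite{Sal}) that equality of the full triple $\flux(d,\cdot)$ implies equivalence of the corresponding tilings. So the task is: given two plugs $p_1,p_2 \in \mathcal{P}_d$ with the \emph{same middle coordinate} $\flux_0(d,p_1)=\flux_0(d,p_2)$, find a plug $p_2'$ with $\flux(d,p_2')=\flux(d,p_1)$ (all three coordinates) and $\tl_{d,p_2}\sim\tl_{d,p_2'}$. First I would apply Lemma~\ref{lem:tdvazio} after splitting: write $p_i = (p_i\cap \D_{d,0}) \cup (p_i \cap \D_{d,\pm1})$, and note that since the flux only depends on which unit squares of $p_i$ land in which region, I may modify $p_i \cap \D_{d,\pm1}$ freely so long as I preserve $\flux_{-1}$ and $\flux_{+1}$. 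Because $\flux_0(d,p_1)=\flux_0(d,p_2)$ and each $\flux_{-1}+\flux_0+\flux_{+1}=0$, the difference $\flux_{-1}(d,p_1)-\flux_{-1}(d,p_2) = -(\flux_{+1}(d,p_1)-\flux_{+1}(d,p_2))$, so a ``transfer'' of signed squares between $\D_{d,-1}$ and $\D_{d,+1}$ of the appropriate net color-count is exactly what is needed to line up all three coordinates.

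The key step is therefore realizing such a transfer as an operation on tilings that respects $\sim$. Here I would use the structure of $\tl_{d,p}=\tl_p^{-1}*f*f_{\mathrm{vert}}*\tl_{p\cup d}$ together with Fact~\ref{fact:resptil}: concretely, I would show that adding to a plug $p$ a pair of adjacent unit squares $s_i\cup s_{i+1}$ lying entirely in $\D_{d,+1}$ (or entirely in $\D_{d,-1}$) changes $\tl_{d,p}$ only by tilings whose dominoes respect $\gamma$, hence leaves the $\sim$-class unchanged — because such a pair is a $\gamma$-respecting domino and can be handled inside a cork by Fact~\ref{fact:resptil} exactly as in the proof of Lemma~\ref{lem:tdvazio}. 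Iterating, I can push $p_2 \cap \D_{d,\pm1}$ toward any other plug supported on $\D_{d,\pm1}$ with the same pair $(\flux_{-1},\flux_{+1})$; and since the relevant regions $\D_{d,-1}$, $\D_{d,+1}$ each carry their own hamiltonian path $\gamma_{d,-1}$, $\gamma_{d,+1}$, the inductive machinery that produced the $\tl_p$ construction applies within them. The upshot is that the $\sim$-class of $\tl_{d,p}$ depends on $p\cap\D_{d,\pm1}$ only through $(\flux_{-1},\flux_{+1})$, and on $p\cap\D_{d,0}$ not at all once $\flux_0$ is fixed (via Lemma~\ref{lem:tdvazio}); combined with Lemma 8.4 of~\cite{Sal} this yields $\tl_{d,p_1}\sim\tl_{d,p_2}$.

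I expect the main obstacle to be the bookkeeping for \emph{parity and connectivity} of the regions $\D_{d,-1}$ and $\D_{d,+1}$: unlike $\D_{d,0}$, these need not be balanced or even nonempty, so a naive ``move a domino from the $+1$ side to the $-1$ side'' is not literally a domino move and must instead be decomposed into several $\gamma$-respecting steps plus an application of Lemma~\ref{lem:tdvazio} to absorb the mismatch in $\D_{d,0}$. I would organize this by first using Lemma~\ref{lem:tdvazio}'s argument to assume $p_i\cap\D_{d,0}=\emptyset$, then arguing that among plugs supported on $\D_{d,\pm1}$ the quantity $(\flux_{-1},\flux_{+1})$ is a complete invariant of the $\sim$-class of $\tl_{d,p}$ by an induction on $|p|$ that peels off a closest opposite-colored pair exactly as in the definition of $\tl_p$, checking at each step that the disk condition ``$\D_{d_i,\pm1}\smallsetminus\D_{d_{i+1},\pm1}$ is a disk'' from Lemma~\ref{lem:alternatingdominoes} keeps the removed pieces genuine sub-disks so that Fact~\ref{fact:resptil} can be applied on each cork.
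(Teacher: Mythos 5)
Your diagnosis in the first paragraph is right: the whole content of Lemma~\ref{lem:fluxinvariance} beyond Lemma 8.4 of~\cite{Sal} is a \emph{transfer} of signed squares between $\D_{d,-1}$ and $\D_{d,+1}$ that changes $(\flux_{-1},\flux_{+1})$ while preserving $\flux_0$. But the mechanism you then propose does not realize any such transfer. Adding to $p$ a $\gamma$-respecting pair $s_i\cup s_{i+1}$ contained entirely in $\D_{d,+1}$ (or entirely in $\D_{d,-1}$) adds one white and one black square to the \emph{same} region, so it preserves each coordinate $\flux_{-1}$, $\flux_0$, $\flux_{+1}$ separately. Iterating such moves can only connect plugs with the same full triple $\flux(d,\cdot)$ — i.e., your second paragraph at best re-derives Lemma 8.4 of~\cite{Sal}, which you already take as known, and the final sentence ("combined with Lemma 8.4 this yields $\tl_{d,p_1}\sim\tl_{d,p_2}$") is a non sequitur: you never produce a plug $p_2'$ with $\flux(d,p_2')=\flux(d,p_1)$ in all three coordinates and $\tl_{d,p_2'}\sim\tl_{d,p_2}$. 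Lemma~\ref{lem:tdvazio} cannot "absorb the mismatch" either; it only applies when $\flux_0=0$ and says nothing about redistributing flux between the two outer regions.

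The missing idea — and the place where the hypothesis that $\gamma$ is a \emph{cycle} rather than a path is actually used — is to exploit the wrap-around adjacency of $s_1$ and $s_{|\D|}$. The paper re-bases the cycle: writing $d=s_k\cup s_l$, it passes to $\tilde\gamma=(s_{k+1},s_k,\ldots,s_1,s_{|\D|},\ldots,s_{k+2})$, with respect to which the roles of $\D_{d,0}$ and $\D_{d,\pm1}$ are interchanged (the old outer regions merge into the new middle region). One checks $\tl_{d,p;\gamma}\sim\tl_{d,p;\tilde\gamma}$ because the only dominoes of $\tl_{d,p;\tilde\gamma}$ not respecting $\gamma$ (besides $d$ itself) project onto $s_1\cup s_{|\D|}$, whose flux triple is $(0,0,0)$, so the corresponding factors die by Lemma~\ref{lem:tdvazio}. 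Applying the full-triple invariance of Lemma 8.4 alternately with respect to $\gamma$ and $\tilde\gamma$ (via the hybrid plug $\hat p_1=(p_2\cap\D_{d,0;\gamma})\cup(p_1\cap\D_{d,\pm1;\gamma})$) then gives exactly the transfer you need. Without some use of the edge $s_1\,s_{|\D|}$ your argument cannot close, since every operation you allow stays within a single region of the decomposition induced by the \emph{path} $(s_1,\ldots,s_{|\D|})$.
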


\begin{proof}
Throughout this proof, in addition to $\gamma$, we also work with another hamiltonian cycle $\tilde{\gamma}$.
To avoid confusion, we distinguish the flux between a domino and plug with respect to each cycle.
Specifically, we denote the flux with respect to $\tilde{\gamma}$ and $\gamma$ by $\flux(\cdot, \cdot ; \tilde{\gamma})$ and $\flux(\cdot, \cdot ;\gamma)$, respectively.
Similarly, we refer to the three subregions of $\D$ determined by a domino that does not respect a given cycle.

Suppose that $d=s_k \cup s_l$ with $k<l$; notice that $l-k \geq 3$.
Consider the hamiltonian cycle $\tilde{\gamma} = (s_{k+1}, s_k, \ldots, s_1, s_{|\D|}, s_{|\D|-1}, \ldots, s_{k+2})$ obtained by changing the initial square of~$\gamma$; for instance, see Figure~\ref{fig:pathalteration}.

\begin{figure}[H]
\centerline{
\includegraphics[width=0.35\textwidth]{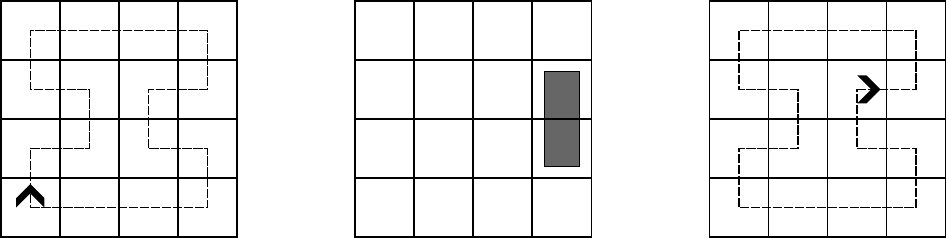}}
\caption{A disk with a hamiltonian cycle $\gamma$, a domino $d$ and the cycle $\tilde{\gamma}$ obtained from~$\gamma$.}
\label{fig:pathalteration}
\end{figure}

We claim that if $p$ is a plug compatible with $d$ then $\tl_{d,p;\gamma} \sim \tl_{d,p;\tilde{\gamma}}$.
Notice that a domino in $\tl_{d,p; \tilde{\gamma}}$ which does not respect $\gamma$ and does not project on $d$ is of the form $d_1 \times [N-1,N]$ with $d_1= s_1 \cup s_{|\D|}$.
Then, $\tl_{d,p; \tilde{\gamma}}$ is $\sim \text{-}$ equivalent to a concatenation of $\tl_{d,p;\gamma}$ and possibly tilings $\tl_{d_1,\hat{p}; \gamma}$.
However, for any plug $\hat{p}$ compatible with $d_1$, we have $\flux(d_1,\hat{p};\gamma)= (0,0,0)$.
Thus, by Lemma~\ref{lem:tdvazio}, $\tl_{d_1,\hat{p};\gamma} \sim  \tl_{\text{vert}}$.
Therefore, $\tl_{d,p;\gamma} \sim \tl_{d,p;\tilde{\gamma}}$.

Let $\hat{p_1}$ be the plug formed by the union of the unit squares in $p_2 \cap \D_{d,0;\gamma}$ and $p_1 \cap \D_{d, \pm 1;\gamma}$.
Notice that $\flux(d,\hat{p_1};\gamma)=\flux(d,p_1;\gamma)$ and $\flux(d,\hat{p_1};\tilde{\gamma})= \flux(d,p_2;\tilde{\gamma})$.
Therefore, we have $\tl_{d,p_1; \gamma} \sim \tl_{d,\hat{p_1}; \gamma} \sim \tl_{d,\hat{p_1}; \tilde{\gamma}} \sim \tl_{d,p_2; \tilde{\gamma}} \sim \tl_{d,p_2;\gamma}$.
\end{proof}

We now deal with tilings that have large flux, more specifically, tilings $\tl_{d,p}$ such that $|\flux_0(d,p)| \geq 2$.
In Lemma~\ref{lem:generatorsfluxgeq2}, we show that in some cases such tilings can be decomposed as a concatenation of tilings with smaller flux.
As a consequence, it follows that the even domino group of a bottleneck-free disk is generated by tilings with $|\flux_0(\cdot,\cdot)|= 1$.

\begin{lemma}\label{lem:generatorsfluxgeq2}
Let $\D$ be a disk with a hamiltonian cycle $\gamma=(s_1,\ldots, s_{|\D|})$ where $s_1=s_{SW}$. 
Consider a domino $d \in \D_{\gamma}$ such that $\D \smallsetminus d$ is connected.
Let $p$ be a plug compatible with $d$ such that $|\flux_0(d,p)| \geq 2$.
Then, there exist plugs $p_0, p_1, p_2 \in \mathcal{P}$ and a domino $\tilde{d} \in \D_{\gamma}$ that does not disconnect~$\D$ such that: 
\begin{enumerate}

    \item $\tl_{d,p} \sim \tl_{\tilde{d},p_1}^{\epsilon_1} * \tl_{d,p_0} * \tl_{\tilde{d},p_2}^{\epsilon_2}$ for some $\epsilon_1, \epsilon_2 \in  \{ -1, 1\}$.

    \item $\max\{ |\flux_0(\tilde{d},p_1)|,|\flux_0(d,p_0)|, |\flux_0(\tilde{d},p_2)| \} < |\flux_0(d,p)|$.

\end{enumerate}
\end{lemma}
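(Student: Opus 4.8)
The plan is to build the decomposition by splitting the plug $p$ across the ``inner'' region $\D_{d,0}$. Write $d = s_k \cup s_l$ with $k < l$, so $\D_{d,0} = s_{k+1} \cup \cdots \cup s_{l-1}$ carries the hamiltonian subpath $\gamma_{d,0}$, and by Lemma~\ref{lem:tdvazio} and Lemma~\ref{lem:fluxinvariance} we may assume $p \subset \D_{d,0}$ with $|\flux_0(d,p)| = |\flux(d,p) \cdot (0,1,0)|$ already $\geq 2$; set $m = \flux_0(d,p)$ and suppose $m \geq 2$ (the case $m \leq -2$ is symmetric, absorbing a sign into the $\epsilon_i$). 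First I would choose the auxiliary domino $\tilde d$: since $\D \smallsetminus d$ is connected and $\D$ is hamiltonian, the inner region $\D_{d,0} \cup d$ is a disk with its own hamiltonian cycle (from $(s_k, s_{k+1}, \ldots, s_l)$), and inside it I want a domino $\tilde d \in \D_\gamma$, not disconnecting $\D$, that further subdivides $\D_{d,0}$ into two pieces each of which can absorb some of the $\pm1$ colour-weight of $p$. Concretely I would take $\tilde d$ to be a domino respecting $\gamma$ placed so that $\D_{\tilde d, 0} \subsetneq \D_{d,0}$ is a strictly smaller balanced sub-disk — e.g. peel off the first two (or last two) squares of $\gamma_{d,0}$, or use Lemma~\ref{lem:alternatingdominoes} applied inside $\D_{d,0}\cup d$ to locate a suitable $\tilde d$; the point is that $\D_{d,0}$ gets cut into $\D_{\tilde d,0}$ and a remainder, and $\tilde d$ does not disconnect $\D$ because it sits strictly inside the interior of $\gamma$.

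Next I would produce the algebraic identity in item (1). The idea is the standard ``insert a cancelling pair'' move: using Fact~\ref{fact: floors} and Fact~\ref{fact:gen} one expresses $\tl_{d,p}$ as the class of a single floor-tiling, and then inserts $\tilde d$-floors: intuitively one first lays down a $\tilde d$-domino over a plug $p_1$, then the original $d$-domino over a plug $p_0$ whose inner flux has been reduced by the part of $p$ that now lives on the far side of $\tilde d$, then undoes the $\tilde d$-domino over $p_2$. The relation $\tl_1 * \tl^{-1} \sim \tl_{\mathrm{vert},p_1}$ quoted after Fact on inverses guarantees the two $\tilde d$-pieces are genuine mutual (pseudo-)inverses up to vertical filler, and the $\sim$-relation lets me pad with vertical floors wherever the heights do not match. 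The choice of $\epsilon_1 = +1$, $\epsilon_2 = -1$ (or vice versa) records the orientation with which each $\tilde d$-floor is inserted; this is forced once the plugs $p_1, p_2$ are fixed.

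For item (2) I would track the inner fluxes. Having split $p$ as (roughly) $p \cap \D_{\tilde d, 0}$ together with $p \cap (\D_{d,0} \smallsetminus \D_{\tilde d,0})$, one gets $\flux_0(\tilde d, p_1)$ and $\flux_0(\tilde d, p_2)$ equal to the colour-weight of one part and $\flux_0(d, p_0)$ equal to the colour-weight of the other, and since $m = \flux_0(d,p)$ is the sum of these two weights with neither being $0$ (here is where I must be careful to choose $\tilde d$ and the split so that the colour-weight genuinely distributes, using $|\flux_0(d,p)| \geq 2$ so that $p \cap \D_{d,0}$ has at least two squares of the majority colour to work with), each of the three numbers has absolute value strictly less than $m$; Lemma~\ref{lem:fluxinvariance} then lets me replace $p_0, p_1, p_2$ by any other plugs with the same inner flux, which keeps the statement clean.

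\textbf{Main obstacle.} The routine part is the floor bookkeeping and the flux arithmetic; the real difficulty is the \emph{existence} of a good $\tilde d$ — one that simultaneously (i) lies in $\D_\gamma$, (ii) does not disconnect $\D$, (iii) has $\D_{\tilde d,0}$ a proper sub-disk of $\D_{d,0}$, and (iv) is positioned so that $p$ can be split with both resulting colour-weights nonzero and of strictly smaller modulus. Conditions (ii) and (iii) are where the hypothesis ``$\D \smallsetminus d$ connected'' (no bottleneck at $d$) is essential: if $d$ disconnected $\D$ the inner region $\D_{d,0}$ could be so thin (a width-two strip) that every domino inside it either respects $\gamma$, or disconnects $\D$, or fails to shrink the inner region, and no valid $\tilde d$ would exist — this is exactly the phenomenon behind the tightness remark and Theorem 4 of~\cite{RaphaIrreg}. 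I expect the proof of existence to be a short case analysis on the shape of $\D_{d,0}$ near its hamiltonian path, possibly invoking $s_1 = s_{SW}$ and a reorientation of $\gamma$ to keep $\tilde d$ away from $s_1 \cup s_{|\D|}$, much as in the proof of Lemma~\ref{lem:alternatingdominoes}.
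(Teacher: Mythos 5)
There is a genuine gap, and it lies in your very first structural choice: where the auxiliary domino $\tilde d$ lives. You place $\tilde d$ entirely inside $\D_{d,0}$ (and at one point even say it ``respects $\gamma$'', which would make $\D_{\tilde d,0}$ undefined and give $\tl_{\tilde d,\cdot}\sim\tl_{\text{vert}}$ by Fact~\ref{fact:resptil}, contradicting your own requirement $\tilde d\in\D_\gamma$). With $\tilde d\subset\D_{d,0}$ the reduction in item (2) cannot work: any flip-realizable decomposition of the form $\tl_{\tilde d,p_1}^{\epsilon_1}*\tl_{d,p_0}*\tl_{\tilde d,p_2}^{\epsilon_2}$ obtained by sliding a $\tilde d$-shaped stack of vertical dominoes into the floors adjacent to the $d$-floor changes the middle plug by exactly the domino $\tilde d$, i.e.\ $p_0=p\smallsetminus\tilde d$ or $p_0=p\cup\tilde d$; since a domino contributes one white and one black square, $\flux_0(d,p_0)=\flux_0(d,p)$ whenever $\tilde d\subset\D_{d,0}$, so the middle flux does not drop at all. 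Your alternative accounting --- that $\flux_0(d,p_0)$ equals the colour-weight of only the part of $p$ outside $\D_{\tilde d,0}$ --- is not what $\flux_0(d,\cdot)$ computes: it always sums over all of $\D_{d,0}$. (A smaller slip: you cannot normalize $p$ to lie inside $\D_{d,0}$, since a plug is balanced in $\D$ while a subset of $\D_{d,0}$ with nonzero colour-weight is not.)

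The paper instead chooses $\tilde d=s_i\cup s_j$ with $s_i\in\D_{d,\pm1}$ and $s_j\in\D_{d,0}$, i.e.\ a domino \emph{straddling} the boundary of $\D_{d,0}$; this is precisely where the hypothesis that $\D\smallsetminus d$ is connected is used (it guarantees such an adjacency exists away from $d$), and it makes adding or removing $\tilde d$ from the plug shift $\flux_0(d,\cdot)$ by $\pm\col(s_j)=\pm1$, which is what produces the strict decrease. The identity in item (1) is then obtained by inserting vertical floors, locating four stacked vertical dominoes over $\tilde d$, and performing three flips, after which Facts~\ref{fact: floors} and~\ref{fact:gen} read off the concatenation. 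The genuinely delicate part is not the existence of $\tilde d$, as your closing paragraph suggests (it is immediate from connectivity), but the normalization of $p$ via Lemmas~\ref{lem:cyclelength} and~\ref{lem:fluxinvariance} so that all three resulting fluxes --- including the two terms $\flux_0(\tilde d,\cdot)$, which are computed over the different region $\D_{\tilde d,0}$ --- are simultaneously bounded by $|\flux_0(d,p)|-1$.
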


\begin{proof}
Since $\D \smallsetminus d$ is connected, there exists a domino $\tilde{d}=s_i \cup s_j$
with $s_i \in \D_{d,\pm 1}$ and $s_j \in \D_{d,0}$.
We have $s_i \neq s_1$; otherwise the fact that $s_1$ is a corner implies that $s_j = s_{\D}$, which is not an element of $\D_{d,0}$.
Notice that $\D \smallsetminus \tilde{d}$ is also connected, as $d$ is formed by unit squares in $\D_{\tilde{d},0}$ and $\D_{\tilde{d},\pm 1}$.
Suppose that $s_i \in \D_{d,-1}$, the case $s_i \in \D_{d,+1}$ is analogous.

Consider first the scenario $\col(s_j) = \sign(\flux_0(d,p))$ (recall that $\col(s_j)=(-1)^j$).
Suppose $d=s_k \cup s_l$, with $k<l$.
Since $|\flux_0(d,p)| \geq 2$, we have $l-k \geq 5$.
By Lemma~\ref{lem:cyclelength}, it follows that $\max\{ k-i, l-j \} \geq 3$.
Consequently, by Lemma~\ref{lem:fluxinvariance}, we may assume, without loss of generality, that $p$ satisfies the following conditions:
\begin{enumerate}[(i)]
    \item $p$ contains $s_i \cup s_j$ and $p \cap \D_{d,0}$ (resp. $p \cap \D_{d, \pm 1}$) consists only of unit squares whose color matches $\col(s_j)$ (resp. $\col(s_i)$).

    \item $p$ contains a square in $\{s_{j+1}, s_{j+2}, \ldots, s_{l-1}\}$ (resp. $\{s_{i+1}, s_{i+2} \ldots, s_{k-1}\}$) if $\col(s_k)=\col(s_j)$ and $l-j \geq 3$ (resp. $k-i \geq 3$).

    \item $p$ contains a square in $\{s_1, s_2 \ldots, s_{i-1}\}$ if $\col(s_k) \neq \col(s_j)$ and $i >2$.

    \item  $p$ contains $s_{|\D|}$ if $\col(s_k) \neq \col(s_j)$ and $i=2$.
\end{enumerate}

Consider the plug $\hat{p}=p \cup d$.
By construction,
$$\tl_{d,p} \sim \tl_p^{-1} * (p, \emptyset, p^{-1})* (p^{-1}, \emptyset, p) * f * f_{\text{vert}} * (\hat{p}, \emptyset, \hat{p}^{-1}) * (\hat{p}^{-1}, \emptyset, \hat{p}) * \tl_{\hat{p}}$$ 
where $f=(p, d, \hat{p}^{-1})$ and $f_{\text{vert}}=(\hat{p}^{-1}, \emptyset, \hat{p})$.
Thus, the later tiling contains four vertical dominoes in $\tilde{d} \times [|p|+1, |p| +5]$.
Perform three flips (Figure~\ref{fig:ex3flips} shows these flips for a specific hamiltonian disk) to conclude that
$$\tl_{d,p} \sim \ldots * (p^{-1}, \tilde{d}, p\smallsetminus \tilde{d}) * (p\smallsetminus \tilde{d} , d, (\hat{p} \smallsetminus \tilde{d})^{-1}) * ((\hat{p} \smallsetminus \tilde{d})^{-1}, \emptyset, \hat{p} \smallsetminus \tilde{d}) * ((\hat{p} \smallsetminus \tilde{d}, \tilde{d}, (\hat{p} \smallsetminus \tilde{d})^{-1})* \ldots.$$
It now follows from Fact~\ref{fact: floors} that $\tl_{d,p} \sim \tl_{\tilde{d}, p \smallsetminus \tilde{d}}^{-1} * \tl_{d,p \smallsetminus \tilde{d}} * \tl_{\tilde{d}, \hat{p} \smallsetminus \tilde{d}}$.

\begin{figure}[H]
\centerline{
\includegraphics[width=0.97\textwidth]{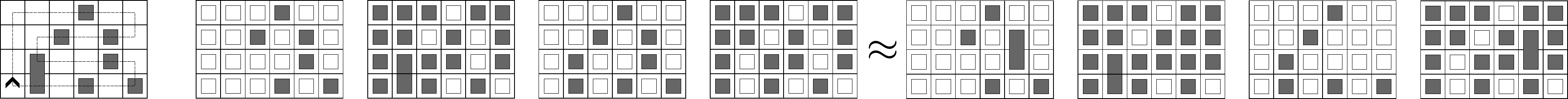}}
\caption{A domino $d$ and a plug $p$ in a hamiltonian disk, and the effect of thee flips in the tiling described by $(p^{-1}, \emptyset, p) * f * f_{\text{vert}} * (p \cup d, \emptyset, (p \cup d)^{-1})$.}
\label{fig:ex3flips}
\end{figure}

A careful analysis shows that under the four conditions above the desired inequality $\max \{|\flux_0(d,p \smallsetminus \tilde{d})|, |\flux_0(\tilde{d},\hat{p} \smallsetminus \tilde{d})|, |\flux_0(\tilde{d},p \smallsetminus \tilde{d})| \} < |\flux_0(d,p)|$ holds.
For the case $\col(s_j) \neq \sign(\flux_0(d,p))$, we derive similar conditions on $p$, which imply that $\tl_{d,p} \sim \tl_{\tilde{d},p} * \tl_{d,p\cup \tilde{d}} * \tl_{\tilde{d},p \cup d}^{-1}$ and $\max \{|\flux_0(d,p \cup \tilde{d})|, |\flux_0(\tilde{d}, p \cup d)|, |\flux_0(\tilde{d},p)| \} < |\flux_0(d,p)|$.
\end{proof}

Henceforth, we restrict our attention to tilings $\tl_{d,p}$ such that $|\flux_0(d,p)|=1$.
The objective is to demonstrate that for any two such tilings $\tl_{d,p}$ and $\tl_{\tilde{d},\tilde{p}}$, we have $\tl_{d,p} \sim \tl_{\tilde{d},\tilde{p}}^{ \pm 1}$.
We first show a partial result in this direction.

\begin{lemma}\label{lem:onegenerator}
Let $\D$ be a disk with a hamiltonian cycle $\gamma = (s_1, \ldots, s_{|\D|})$ where $s_1=s_{SW}$.
Consider a domino $\tilde{d} \in \D_{\gamma}$ and a plug $\tilde{p} \in \mathcal{P}_d$ such that $|\flux_0(\tilde{d},\tilde{p})|=1$.
Then, for each $d \in \D_{\gamma}$ there exists $p \in \mathcal{P}_d$ such that $|\flux_0(d,p)|=1$ and $\tl_{d,p}^{\pm 1} \sim \tl_{\tilde{d},\tilde{p}}$.
\end{lemma}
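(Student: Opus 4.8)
\textbf{Proof proposal for Lemma~\ref{lem:onegenerator}.}

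The plan is to use Lemma~\ref{lem:alternatingdominoes} to reduce to the case where $d$ and $\tilde{d}$ are ``close'' in the sense that one of the three alternatives of that lemma holds, and then handle each of those three cases. More precisely, since $s_1 = s_{SW}$, Lemma~\ref{lem:alternatingdominoes} produces a chain of dominoes $d_1 = \tilde{d}, d_2, \ldots, d_n = d$ in $\D_\gamma$ such that consecutive dominoes satisfy one of conditions (1)--(3). I would induct along this chain: it suffices to prove that if $d'$ and $d''$ are consecutive in the chain and there is a plug $p'$ with $|\flux_0(d', p')| = 1$ and $\tl_{d',p'}^{\pm 1} \sim \tl_{\tilde d, \tilde p}$, then there is a plug $p''$ with $|\flux_0(d'', p'')| = 1$ and $\tl_{d'',p''}^{\pm 1} \sim \tl_{d', p'}$. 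Combined with the induction, this yields the lemma.

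So the core is the one-step claim. Consider first alternative (2): $d'' \subset \D_{d',0}$ and $\D_{d',0} \smallsetminus \D_{d'',0}$ is a disk. Here I would build $p''$ by choosing a plug compatible with $d''$ whose intersection with $\D_{d'',0}$ lies entirely in the disk $\D_{d',0} \smallsetminus \D_{d'',0}$ together with $d''$ itself — adjusting colors via Lemma~\ref{lem:fluxinvariance} so that $|\flux_0(d'',p'')| = 1$. The idea is that the region between $d'$ and $d''$ forms a disk carrying an induced hamiltonian path, so the tiling $\tl_{d'',p''}$, after unfolding its definition $\tl_{d''} = \tl_{p''}^{-1} * f * f_{\mathrm{vert}} * \tl_{(p''\cup d'')^{-1}}$ and performing flips, can be massaged into a concatenation involving $\tl_{d',p'}$ and tilings of the form $\tl_{e, \hat p}$ where $e$ respects the induced path and hence, by Fact~\ref{fact:resptil}, is $\sim$-trivial. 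Alternative (3) is entirely analogous, working in $\D_{d',\pm 1}$ instead. Alternative (1) — where $d''$ joins a square of $\D_{d',0}$ to a square of $\D_{d',\pm 1}$ — is essentially the configuration analyzed in the proof of Lemma~\ref{lem:generatorsfluxgeq2} (the three-flip move producing $\tl_{d',p}^{\mp 1} * \tl_{d'', \cdot} * \tl_{d', \cdot}$ type identities, read in reverse), so I would adapt that argument: choose $p'$ so that the extra $\tl_{d', \cdot}$ factors have $\flux_0 = 0$ and are killed by Lemma~\ref{lem:tdvazio}, leaving $\tl_{d'',p''}^{\pm 1} \sim \tl_{d', p'}$ with $|\flux_0(d'',p'')| = 1$.

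Throughout, the bookkeeping device is that $|\flux_0|$ of a plug with respect to a domino $e = s_a \cup s_b$ is, up to sign, the imbalance of the chosen plug inside the balanced cyclic region $\D_{e,0}$; so whenever the ``difference region'' between two dominoes is a disk with an induced hamiltonian path, Fact~\ref{fact:resptil} lets me discard all the tiling pieces supported there, and Lemmas~\ref{lem:tdvazio} and~\ref{lem:fluxinvariance} let me normalize the remaining flux to have absolute value exactly one. The main obstacle I anticipate is the careful color/parity analysis needed to guarantee that in each of the three cases one can \emph{simultaneously} arrange $|\flux_0(d'',p'')| = 1$ and exhibit the explicit flip sequence relating $\tl_{d'',p''}$ to $\tl_{d',p'}$ — in particular making sure the auxiliary dominoes appearing in the concatenation either respect the relevant induced path or have vanishing $\flux_0$, since a miscolored square could force a nonzero flux contribution that does not cancel. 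This is exactly the ``careful analysis'' that was invoked but not spelled out in Lemma~\ref{lem:generatorsfluxgeq2}, and it will need to be done afresh for the disk-difference cases (2) and (3), where the relabeled hamiltonian path on the sub-disk must be tracked to know which dominoes respect it.
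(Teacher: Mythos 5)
Your outline follows the paper's proof: the same chain from Lemma~\ref{lem:alternatingdominoes}, an induction along it preserving $|\flux_0|=1$ up to inversion, case (1) handled by the three-flip identity of Lemma~\ref{lem:generatorsfluxgeq2} with the spurious factors killed by Lemma~\ref{lem:tdvazio} (the paper normalizes to $p_i=d_{i+1}$ or $p_i^{-1}=d_i\cup d_{i+1}$), and cases (2)--(3) handled by flipping inside the disk-shaped difference region. Two points in your sketch need repair, though. First, your normalization in case (2) is stated backwards: a plug $p''$ with $p''\cap \D_{d'',0}\subset \D_{d',0}\smallsetminus\D_{d'',0}$ is impossible unless $p''\cap\D_{d'',0}=\emptyset$, since these sets are disjoint, and that would force $\flux_0(d'',p'')=0$. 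The correct normalization (via Lemma~\ref{lem:fluxinvariance}, as in the paper) is on the \emph{outer} domino's plug: arrange $p'\cap\D_{d',0}\subset\D_{d'',0}$, so that the difference disk $\D_i=(\D_{d',0}\cup d')\smallsetminus\D_{d'',0}$ meets only vertical dominoes in the two relevant floors of $\tl_{d',p'}$, and these can be flipped into horizontal dominoes along the two paths obtained from the induced cycle by disconnecting at $d'$ and $d''$. Second, discarding ``pieces that respect the induced path'' via Fact~\ref{fact:resptil} is not quite right: triviality is relative to $\gamma$, and the dominoes along those induced paths that fail to respect $\gamma$ are precisely $d'$ and $d''$; which of them actually occur is decided by a parity analysis (the two paths are both of even or both of odd length), and this is what produces the exponent $\pm 1$ and the resulting plug ($p''=p'$ versus $p''=p'\cup d'$). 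That parity/color bookkeeping, which you explicitly defer, is the substantive content of the paper's argument for cases (2) and (3), so as written your proposal is a correct plan with the decisive step left open rather than a complete proof.
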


\begin{proof}
Take dominoes $d_1,d_2, \ldots, d_n$ as in Lemma~\ref{lem:alternatingdominoes}.
Then, $d_1=\tilde{d}$, $d_n = d$ and there are three possibilities for the relative position of $d_i$ and $d_{i+1}$.
We claim that there exist plugs $p_1, p_2, \ldots,p_n \in \mathcal{P}$ such that $p_1=\tilde{p}$, $|\flux_0(d_{i+1},p_{i+1})| = |\flux_0(d_i,p_i)|$ and $\tl_{d_{i+1},p_{i+1}} \sim \tl_{d_i,p_i}^{\pm 1}$.
Notice that the claim implies the desired result.

From now on, we use Lemma~\ref{lem:fluxinvariance} repeatedly without further mention.
In order to prove the claim, we analyze the possible relative positions of $d_i$ and $d_{i+1}$.
First, suppose that $d_{i+1}= s_k \cup s_l$ with $s_k \subset \D_{d_i,0}$ and $s_l \subset \D_{d_i,\pm 1}$.
Since $|\flux_0(d_i,p_i)| = 1$, we may assume that either $p_i=d_{i+1}$ or $p_i^{-1}=d_i \cup d_{i+1}$.
Now, as in the proof of Lemma~\ref{lem:generatorsfluxgeq2}, if $p_i=d_{i+1}$ then $\tl_{d_i, p_i} \sim \tl_{d_{i+1},\pl_{\circ}}^{-1} * \tl_{d_i,\pl_{\circ}} * \tl_{d_{i+1},d_i}$ and therefore, by Lemma~\ref{lem:tdvazio}, $\tl_{d_i,p_i} \sim \tl_{d_{i+1},d_i}$.
Similarly, if $p_i^{-1}=d_i \cup d_{i+1}$ then \mbox{$\tl_{d_i,p_i} \sim \tl_{d_{i+1}, \pl_{\bullet} \smallsetminus (d_i \cup d_{i+1})} * \tl_{d_i, \pl_{\bullet} \smallsetminus d_{i}} * \tl_{d_{i+1} , \pl_{\bullet} \smallsetminus d_{i+1}}^{-1}$} and consequently $\tl_{d_i,p_i} \sim \tl_{d_{i+1}, \pl_{\bullet}\smallsetminus (d_i\cup d_{i+1})}$.
In both cases we obtain a plug $p_{i+1}$ such that $\tl_{d_i,p_i} \sim \tl_{d_{i+1},p_{i+1}}$ and $|\flux_0(d_{i+1},p_{i+1})| = |\flux_0(d_i,p_i)|$.

Secondly, suppose that $d_{i+1}=s_k \cup s_l$, where $s_k,s_l \subset \D_{d_i,0}$, and that $\D_{d_i,0} \smallsetminus \D_{d_{i+1},0}$ is a disk.
Then, $\D_i= (\D_{d_i,0}\cup d_i) \smallsetminus \D_{d_{i+1},0}$ is also a disk and has a hamiltonian cycle $\gamma_i$ induced by $\gamma$.
Let $\gamma_{i,1}$ and $\gamma_{i,2}$ be the paths obtained from $\gamma_i$ by disconnecting the adjacent unit squares in $d_i$ and $d_{i+1}$.
There are two cases: $|\gamma_{i,1}|$ and $|\gamma_{i,2}|$ are both either even or odd.

We obtain tilings $\tl_{i,1} \in \mathcal{T}(\D_i \times [0,1])$ and $\tl_{i,2} \in \mathcal{T}((\D_i \smallsetminus d_i) \times [0,1])$ by placing dominoes along $\gamma_{i,1}$ and $\gamma_{i,2}$; for instance, see Figure~\ref{fig:t1t2}.
If $|\gamma_{i,1}|$ and $|\gamma_{i,2}|$ are even then $(d_i \cup d_{i+1}) \times [0,1] \not\in \tl_{i,1}$ and $d_{i+1} \times [0,1] \in \tl_{i,2}$.
If $|\gamma_{i,1}|$ and $|\gamma_{i,2}|$ are odd then $d_i \times [0,1] \not\in \tl_{i,1}$, $d_{i+1} \times [0,1] \in \tl_{i,1}$ and $d_{i+1} \times [0,1] \not\in \tl_{i,2}$.
Let $f_{i,1}^*$ and $f_{i,2}^*$ be the set of planar dominoes that describes $\tl_{i,1}$ and $\tl_{i,2}$, respectively.

\begin{figure}[H]
\centerline{
\includegraphics[width=0.378\textwidth]{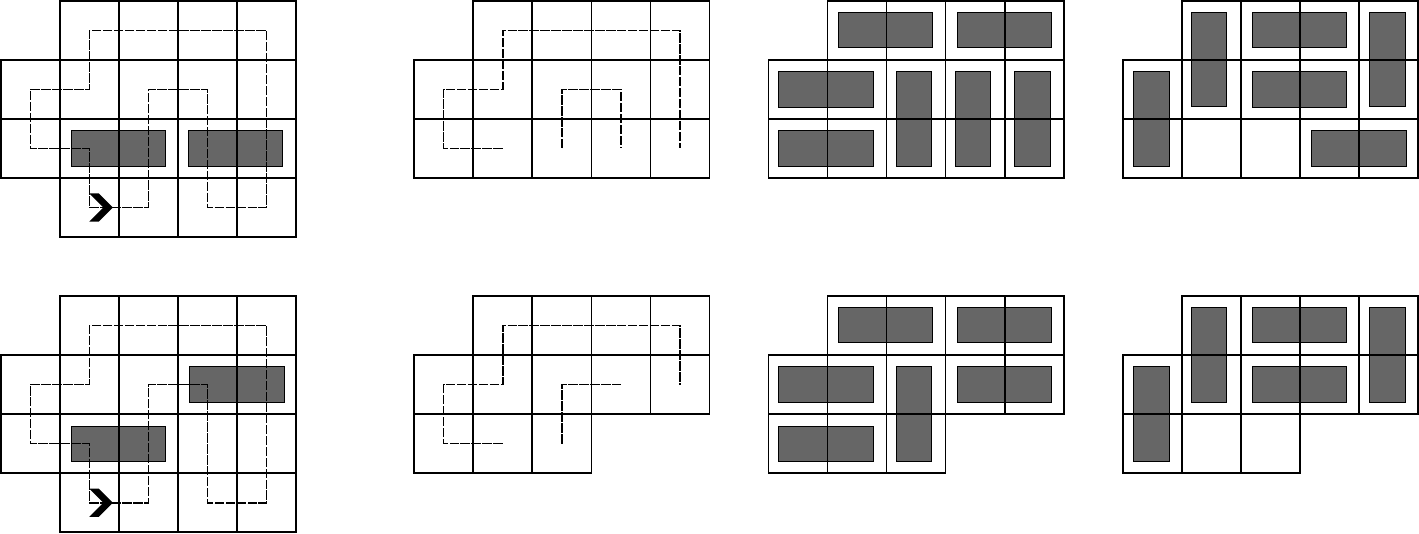}}
\caption{Two examples of a disk with a hamiltonian cycle $\gamma$ and dominoes $d_i$ and $d_{i+1}$, the disk $\D_i$ with paths $\gamma_{i,1}$ and $\gamma_{i,2}$, and the tilings $\tl_{i,1}$ and $\tl_{i,2}$.
In the first (resp.\ second) example $|\gamma_{i,1}|$ and $|\gamma_{i,2}|$ are both even (resp.\ odd).}
\label{fig:t1t2}
\end{figure}

Consider the tiling $\tl_{d_i,p_i} = \tl_{p_i}^{-1} * f * f_{\text{vert}} * \tl_{p_i \cup d_i}$.
We may assume that $p_i \cap \D_{d_i,0} \subset \D_{d_{i+1},0}$, as $|\flux_0(d_i,p_i)|=1$.
Then, $\tl_{d_i,p_i}$ covers the region $(\D_{d_i,0} \smallsetminus \D_{d_{i+1},0}) \times [|p_i|,|p_i|+2]$ only with vertical dominoes.
Since $\D_i$ is a disk, as in the proof of Lemma~\ref{lem:tdvazio}, we have a sequence of flips that takes $\tl_{d_i,p_i}$ to the tiling $\tl=\tl_{p_i}^{-1} * (p_i, f_{i,1}^*, (\D_i \cup p_i)^{-1}) * ((\D_i \cup p_i)^{-1}, f_{i,2}^*, p_i \cup d_i) * \tl_{p_i \cup d_i}$.
From Facts~\ref{fact: floors} and~\ref{fact:gen}, if $|\gamma_{i,1}|$ and $|\gamma_{i,2}|$ are odd (resp.\ even) then $\tl$ is $\sim$-equivalent to $\tl_{d_{i+1},p_i}$ (resp.\ $\tl_{d_{i+1}, p_i \cup d_i}^{-1}$). 
Thus, in any case, we have a plug $p_{i+1}$ such that $\tl_{d_i,p_i} \sim \tl_{d_{i+1},p_{i+1}}^{\pm 1}$ and $\flux_0(d_{i+1},p_{i+1}) = \flux_0(d_i,p_i)$.
Finally, notice that a completely analogous argument holds in the third possible case, when $d_{i+1}=s_k \cup s_l$ with $s_k,s_l \subset \D_{d_i, \pm 1}$, and $\D_{d_i, \pm 1} \smallsetminus \D_{d_{i+1}, \pm 1}$ is a disk.
\end{proof}

For hamiltonian disks without bottlenecks, we now prove the existence of a domino that, in some sense, connects the family of tilings with flux equals $+1$ and the family of tilings with flux equals $-1$.
As a corollary, we obtain that the even domino group of a bottleneck-free hamiltonian disk is cyclic.

\begin{lemma}\label{lem:inversetilingplug}
Let $\D$ be a nontrivial disk with a hamiltonian cycle $\gamma=(s_1,  \ldots, s_{|\D|})$ where $s_1=s_{SW}$.
If there is no domino that disconnects $\D$ then there exists a domino $d\in \D_{\gamma}$ such that $\tl_{d,p}^{-1} \sim \tl_{d,p^{-1} \smallsetminus d}$ for every plug $p \in \mathcal{P}_d$ with $\flux_0(d,p)=1$.
\end{lemma}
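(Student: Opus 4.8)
The plan is to reduce the statement to a single carefully chosen domino and plug, and then to a flip computation that cancels the only two non-$\gamma$-respecting dominoes involved. First, since $p \in \mathcal{P}_d$ we have $p^{-1}\smallsetminus d = \D\smallsetminus(p\cup d) = (p\cup d)^{-1}$; write $q := p^{-1}\smallsetminus d$. As $\D_{d,0}$ is balanced, $\sum_{s_i\subset\D_{d,0}}\col(s_i)=0$, so
$$\flux_0(d,q)=\sum_{s_i\subset q\cap\D_{d,0}}\col(s_i)=\sum_{s_i\subset\D_{d,0}}\col(s_i)-\sum_{s_i\subset p\cap\D_{d,0}}\col(s_i)=-\flux_0(d,p)=-1 .$$
By Lemma~\ref{lem:fluxinvariance}, the $\sim$-class of $\tl_{d,q}$ depends only on $\flux_0(d,\cdot)=-1$ and the $\sim$-class of $\tl_{d,p}$ only on $\flux_0(d,\cdot)=+1$; moreover $\tl_{d,p}^{-1}\sim\tl_{d,q}$ is equivalent to $\tl_{d,p}*\tl_{d,q}\sim\tl_{\text{vert}}$. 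Hence it suffices to produce a single domino $d\in\D_\gamma$ with $\D\smallsetminus d$ connected and a single plug $p\in\mathcal{P}_d$ with $\flux_0(d,p)=1$ for which $\tl_{d,p}*\tl_{d,q}\sim\tl_{\text{vert}}$.

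Next I would unwind the combinatorics. Setting $f=(p,d,(p\cup d)^{-1})$, the definition of $\tl_{d,\cdot}$ together with the floor description of the inverse give
$$\tl_{d,p}^{-1}=\tl_{p\cup d}^{-1}*\bigl(p\cup d,\ \emptyset,\ (p\cup d)^{-1}\bigr)*f^{-1}*\tl_p,\qquad \tl_{d,q}=\tl_{(p\cup d)^{-1}}^{-1}*f^{-1}*\bigl(p,\ \emptyset,\ p^{-1}\bigr)*\tl_{p^{-1}},$$
where the central floor $f^{-1}=((p\cup d)^{-1},d,p)$ is literally the same in both tilings and every other floor respects $\gamma$. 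Equivalently, in $\tl_{d,p}*\tl_{d,q}$ the only dominoes that do not respect $\gamma$ are two translates of $d\times[\cdot,\cdot]$. I would then choose $d$ using the boundary-domino analysis from the proof of Lemma~\ref{lem:alternatingdominoes}: since $\D$ is nontrivial, hamiltonian, and has no disconnecting domino, there is $d\in\D_\gamma$ on $\partial\D$, in the exterior of $\gamma$, for which $\D_{d,0}\cup d$ and $\D_{d,\pm1}\cup d$ are sub-disks whose hamiltonian structure near the corner $s_1=s_{SW}$ we can control; taking $p$ to consist of one black square of $\D_{d,\pm1}$ together with one white square of $\D_{d,0}$ gives $\flux_0(d,p)=1$ and makes $\tl_{d,p}$ and $\tl_{d,q}$ fully explicit.

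Then I would carry out the flip computation. Using that $\sim$ permits inserting an even number of vertical floors between the two translates of $d$, and that flips confined to $(\D_{d,0}\cup d)\times[\cdot,\cdot]$ and to $(\D_{d,\pm1}\cup d)\times[\cdot,\cdot]$ act independently, I would slide the two horizontal copies of $d\times[\cdot,\cdot]$ into consecutive floors $d\times[m,m+1]$ and $d\times[m+1,m+2]$ and then apply the flip inside $d\times[m,m+2]$ that replaces these two horizontal dominoes by two vertical ones. After this cancellation every domino of the resulting tiling respects $\gamma$, so Fact~\ref{fact:resptil} yields $\tl_{d,p}*\tl_{d,q}\sim\tl_{\text{vert}}$, which is exactly the reduced claim.

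The hard part will be the sliding step. Before any manipulation the two sides of the claimed equivalence already differ by a single vertical floor sitting on opposite sides of the common floor $f^{-1}$ — for instance $\tl_{p\cup d}^{-1}*(p\cup d,\emptyset,(p\cup d)^{-1})$ and $\tl_{(p\cup d)^{-1}}^{-1}$ have heights of opposite parity — so no crude parity or homology count can close the gap, and one must genuinely exploit the special shape of the chosen $d$ and the bottleneck-free hypothesis to guarantee that the intermediate tilings of the relevant sub-cylinders needed to move the two copies of $d\times[\cdot,\cdot]$ together actually exist. I expect this to be the bulk of the work, with everything else being routine bookkeeping of plugs and of $\flux_0$.
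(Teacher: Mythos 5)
Your reductions are fine: passing to $q=p^{-1}\smallsetminus d=(p\cup d)^{-1}$, computing $\flux_0(d,q)=-1$, invoking Lemma~\ref{lem:fluxinvariance} to reduce to a single well-chosen pair $(d,p)$, and observing that both tilings share the central floor $f^{-1}$ with all other dominoes respecting $\gamma$ — all of this matches the set-up of the paper's argument. But the proof has a genuine gap exactly where you yourself flag ``the hard part'': the claim that the two non-respecting copies of $d\times[\cdot,\cdot]$ can be slid into consecutive floors and cancelled by one flip is asserted, not established, and it is not a routine flip computation. Between the two copies of $d$ sit the floors of $\tl_{p\cup d}$ and $\tl_{(p\cup d)^{-1}}^{-1}$, which occupy the column over $d$ with prescribed dominoes, and moving a horizontal $d$ vertically through such floors is precisely as hard as the lemma itself; having twist zero does not help, since ``twist zero implies $\sim\tl_{\mathrm{vert}}$'' is the regularity statement one is trying to prove. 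Moreover, a generic slide-and-cancel of this kind would use the bottleneck-free hypothesis only through the vague existence of ``intermediate tilings,'' whereas the analogous cancellation genuinely fails for disks with bottlenecks (where generators of opposite flux are not inverse to each other and $G_\D^+$ can surject onto $F_2$), so some concrete, quantitative input is unavoidable.

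The paper supplies that input as follows: since there is no disconnecting domino and $s_1=s_{SW}$, the corner $s_1$ sits inside a $3\times3$ square $\tilde\D$; after a case analysis of the three possible local patterns of $\gamma$ in $\tilde\D$, the domino $d$ is taken adjacent and parallel to $s_1\cup s_{|\D|}$, the plug is normalized by Lemma~\ref{lem:fluxinvariance}, and explicit flips confine all the action to $\tilde\D\times[\,\cdot\,,\cdot\,]$; the decisive rearrangement is then obtained by noting that part of the tiling is a tiling of a $3\times3\times4$ box and invoking the previously established regularity of the $3\times4$ rectangle (Lemma 9.2 of~\cite{Sal}) to get $\tl_2\sim\tl_3$ — a $\sim$-equivalence requiring added vertical space, not a plain flip sequence — after which Facts~\ref{fact:resptil}, \ref{fact: floors}, \ref{fact:gen} and Lemma~\ref{lem:tdvazio} identify the result with $\tl_{d,p}^{-1}$. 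Your proposal has no counterpart for this external regularity input or for the localization to the corner, so as written the central step remains unproved; to complete your route you would need either to import the $3\times4$ regularity result in the same way or to exhibit the explicit flip-and-padding sequence realizing the cancellation for your chosen boundary domino, which is the bulk of the paper's proof.
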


\begin{proof}
Notice that $s_1$ is contained in a $3 \times 3$ square $\widetilde{\D}$, as no domino disconnects $\D$ and $s_1=s_{SW}$.
Suppose that in $\widetilde{\D}$ the cycle $\gamma$ follows one of the three patterns illustrated in Figure~\ref{fig:pathtypes}; the other cases are obtained by reversing the orientation, and the argument follows similarly.
We say that $\gamma$ is of type $i$ if it follows the $i$-th possible pattern.

\begin{figure}[H]
\centerline{
\includegraphics[width=0.25\textwidth]{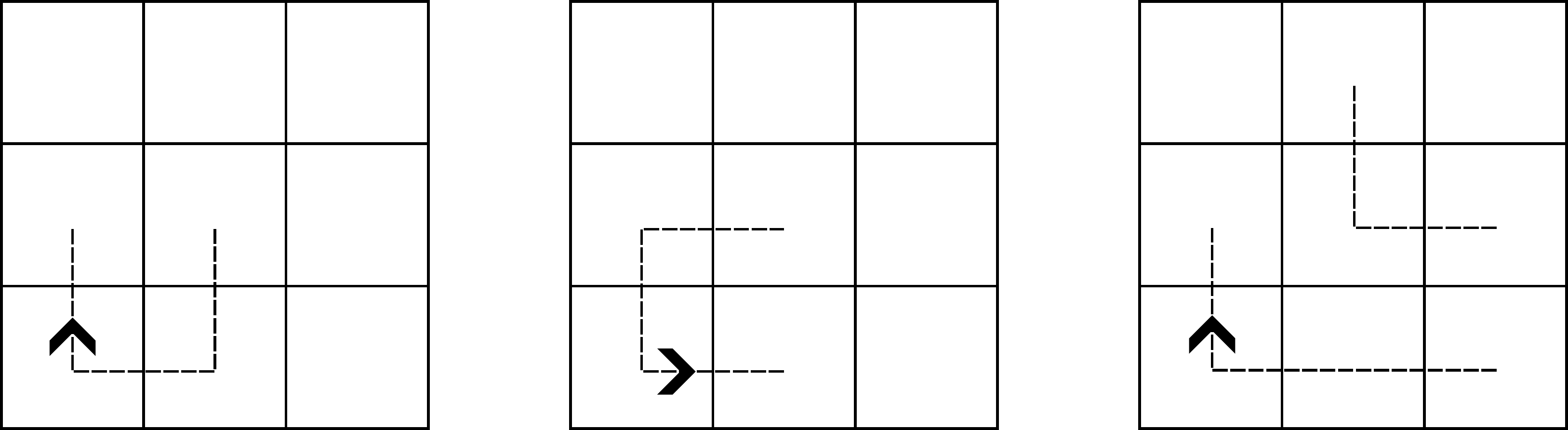}}
\caption{Three possible patterns of $\gamma$.}
\label{fig:pathtypes}
\end{figure}

Let $d$ be the domino adjacent and parallel to $s_1 \cup s_{|\D|}$.
Suppose that $\gamma$ is either of type 1 or type 3; the case where $\gamma$ is of type 2 is analogous to the case where $\gamma$ is of type 1.
Consider a plug $p \in \Pl_d$ such that $\flux_0(d,p)=1$; consequently, $\flux_0(d,p^{-1} \smallsetminus d)=-1$.
By Lemma~\ref{lem:fluxinvariance}, it suffices to consider the case in which $p^{-1} \smallsetminus d= s_{|\D|} \cup s_i$, where $s_i \subset \widetilde{\D} \smallsetminus d$ denotes the unit square adjacent to $s_2$.

Consider the tiling $\tl_{d,p^{-1} \smallsetminus d} = \tl_{p^{-1}\smallsetminus d}^{-1} * f * f_{\text{vert}} * \tl_{p^{-1}}$.
Insert vertical floors around $f$ and $f_{\text{vert}}$ to obtain a tiling $\tl_1 \sim \tl_{d,p^{-1} \smallsetminus d}$; the restriction of $\tl_1$ to $\widetilde{\D} \times [|p^{-1} \smallsetminus d|, |p^{-1} \smallsetminus d|+7]$ is shown in the first row of Figure~\ref{fig:tdp-1}.
Let $\tl_i$ be the tiling whose restriction to the region $\widetilde{\D} \times [|p^{-1} \smallsetminus d|, |p^{-1} \smallsetminus d|+7]$ is as in the $i$-th first row of Figure~\ref{fig:tdp-1}, and which is equal to $\tl_1$ outside this region.
We have $\tl_1 \approx \tl_2$ and $\tl_3 \approx \tl_4 \approx \tl_5$.
Notice that the restrictions of $\tl_2$ and $\tl_3$ to $\widetilde{\D} \times [|p^{-1} \smallsetminus d|+2, |p^{-1} \smallsetminus d|+6]$ define two tilings, both with the same twist, of a $3 \times 3  \times 4$ box.
Since the $3 \times 4$ rectangle is regular (see Lemma 9.2~of~\cite{Sal22}), it follows that $\tl_2 \sim \tl_3$.

\begin{figure}[H]
\centerline{
\includegraphics[width=0.42\textwidth]{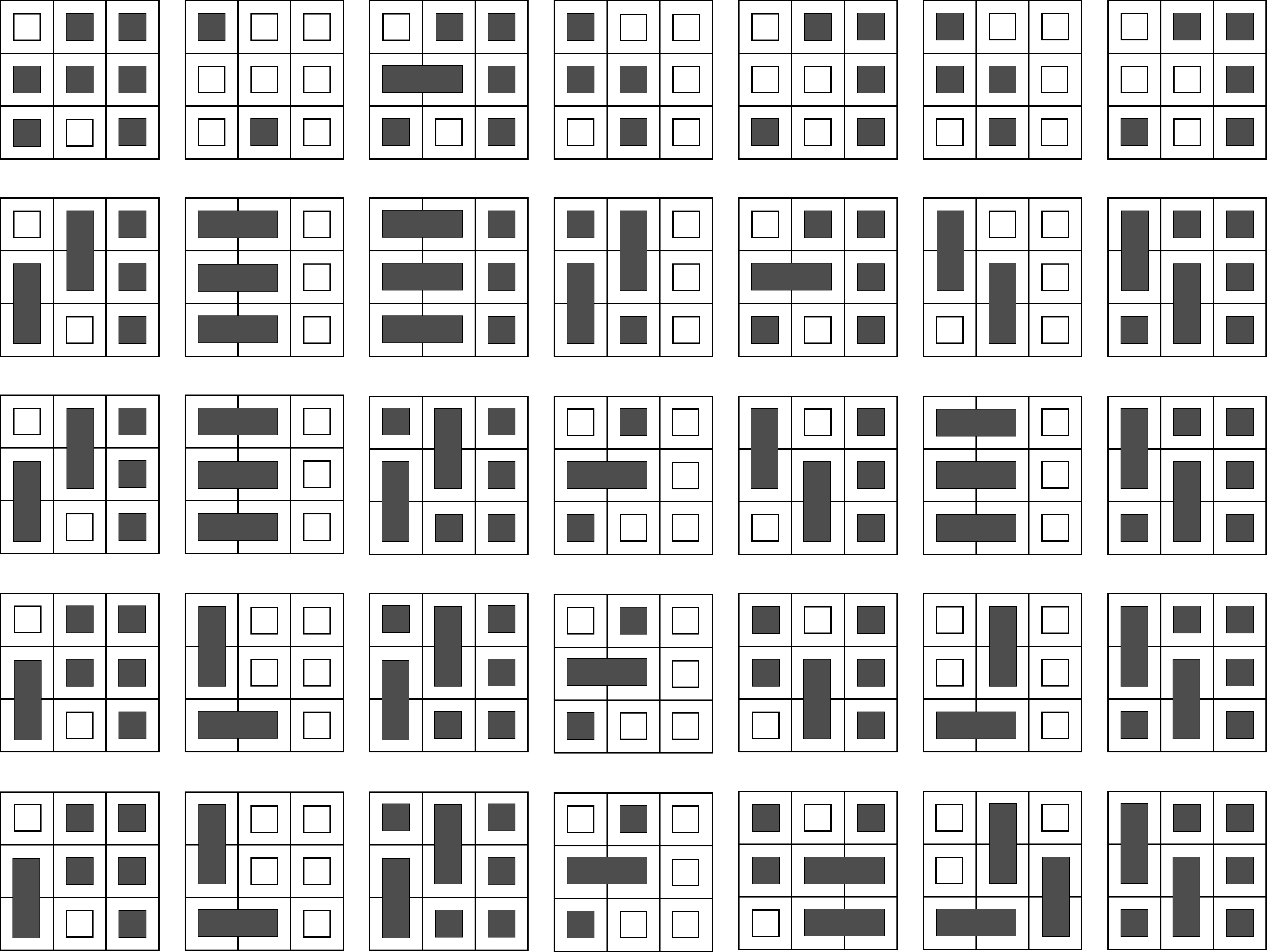}}
\caption{The $i$-th row shows the restriction to $\widetilde{\D}$ of seven floors of a tiling $\tl_i$.}
\label{fig:tdp-1}
\end{figure}

We focus on $\tl_4$ when $\gamma$ is of type 1.
Let $f_1, f_2, \ldots, f_7$ be the seven floors of $\tl_4$ whose restriction to $\widetilde{\D}$ is shown in Figure~\ref{fig:tdp-1}.
It follows from Facts~\ref{fact:resptil} and~\ref{fact: floors} that $\tl_4 \sim \tl_{f_2^{-1}}^{-1} * \tl_{f_3} * \tl_{f_4^{-1}}^{-1} * \tl_{f_6^{-1}}^{-1} * \tl_{f_7}$, since all other floors of $\tl_4$ contain only dominoes that respect the cycle $\gamma$.
By Fact~\ref{fact:gen} and Lemma~\ref{lem:tdvazio}, we then have $\tl_4 \sim \tl_{f_4^{-1}}^{-1}$.
On the other hand, $\tl_{f_4^{-1}}^{-1} = \tl_{d, s_1 \cup s_j}^{-1}$, where $s_j \subset \widetilde{\D} \smallsetminus s_2$ is the unit square adjacent to $s_i$.
Finally, by Lemma~\ref{lem:fluxinvariance}, $\tl_{d, s_1 \cup s_j}^{-1} \sim \tl_{d,p}^{-1}$.
Similarly, $\tl_5 \sim \tl_{d,p}^{-1}$ when $\gamma$ is of type 3.
\end{proof}

\begin{corollary}\label{col:cyclic}
Let $\D$ be a nontrivial hamiltonian balanced quadriculated disk.
Suppose that there is no domino that disconnects $\D$.
Consider dominoes $d_1,d_2 \in \D_{\gamma}$ and plugs $p_1,p_2 \in \mathcal{P}$.
If $|\flux_0(d_1,p_1)|=1=|\flux_0(d_2,p_2)|$ then $\tl_{d_1,p_1}^{\pm 1}\sim \tl_{d_2,p_2}$.
\end{corollary}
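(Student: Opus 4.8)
The plan is to combine Lemma~\ref{lem:onegenerator} with Lemma~\ref{lem:inversetilingplug} in a two-step argument. First I would normalize the two given tilings. By Lemma~\ref{lem:fluxinvariance}, the $\sim$-class of $\tl_{d_i,p_i}$ depends only on $\flux_0(d_i,p_i) \in \{+1,-1\}$, so I may freely replace $p_i$ by any compatible plug with the same $\flux_0$. Now apply Lemma~\ref{lem:onegenerator} twice, using the special domino $d$ provided by Lemma~\ref{lem:inversetilingplug} as the ``hub'': starting from $(d_1,p_1)$ we obtain a plug $q_1 \in \mathcal{P}_d$ with $|\flux_0(d,q_1)|=1$ and $\tl_{d_1,p_1}^{\pm 1} \sim \tl_{d,q_1}$; likewise $\tl_{d_2,p_2}^{\pm 1} \sim \tl_{d,q_2}$ for some $q_2 \in \mathcal{P}_d$ with $|\flux_0(d,q_2)|=1$. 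Thus it suffices to show $\tl_{d,q_1}^{\pm 1} \sim \tl_{d,q_2}$ for the single hub domino $d$.

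The second step handles the sign. If $\flux_0(d,q_1)$ and $\flux_0(d,q_2)$ have the same sign, then $\tl_{d,q_1} \sim \tl_{d,q_2}$ immediately by Lemma~\ref{lem:fluxinvariance}, and we are done. If they have opposite signs, then without loss of generality $\flux_0(d,q_1)=1$ and $\flux_0(d,q_2)=-1$. Using Lemma~\ref{lem:fluxinvariance} again I may assume $q_2 = q_1^{-1} \smallsetminus d$ (this plug is compatible with $d$ and has $\flux_0$ equal to $-\flux_0(d,q_1)=-1$, since $q_1$ and $q_1^{-1}\smallsetminus d$ partition $\D \smallsetminus d$ and the flux contributions add up to zero). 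But Lemma~\ref{lem:inversetilingplug} says precisely that $\tl_{d,q_1}^{-1} \sim \tl_{d,q_1^{-1}\smallsetminus d}$ for every such $q_1$. Hence $\tl_{d,q_2} \sim \tl_{d,q_1}^{-1}$, which gives $\tl_{d,q_1}^{\pm1} \sim \tl_{d,q_2}$. Tracing the exponents back through the two applications of Lemma~\ref{lem:onegenerator} yields $\tl_{d_1,p_1}^{\pm 1} \sim \tl_{d_2,p_2}$, as claimed.

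I do not anticipate a genuine obstacle here: the corollary is essentially a bookkeeping assembly of three lemmas, with the only subtle point being to make sure the hypotheses of Lemma~\ref{lem:onegenerator} and Lemma~\ref{lem:inversetilingplug} are available. Both require a hamiltonian cycle $\gamma$ with $s_1 = s_{SW}$ and the absence of a disconnecting domino, which are exactly the hypotheses of the corollary (after fixing such a $\gamma$, which exists since $\D$ is hamiltonian); note that $\D_\gamma$ is nonempty precisely because $\D$ is nontrivial, so the dominoes $d_1,d_2$ and the hub $d$ make sense. The mildest care needed is that the plugs produced by Lemma~\ref{lem:onegenerator} are compatible with the \emph{same} hub domino $d$, but that is guaranteed since in both applications we take $\tilde d = d$ and run the chain of dominoes from Lemma~\ref{lem:alternatingdominoes} from $d$ outward to $d_1$ and to $d_2$ respectively; equivalently, run it from $d_i$ to $d$ and invert. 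Finally, as remarked in the text, this cyclicity statement is what underlies the identification of $G_\D^+$ with $\mathbb{Z}$ and hence the regularity asserted in Theorem~\ref{thm:regdisks}.
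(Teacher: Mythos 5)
Your proposal is correct and follows essentially the same route as the paper: take the hub domino $d$ from Lemma~\ref{lem:inversetilingplug}, use Lemma~\ref{lem:onegenerator} to bring both $\tl_{d_1,p_1}$ and $\tl_{d_2,p_2}$ (up to inverse) to tilings $\tl_{d,q_i}$ with $|\flux_0(d,q_i)|=1$, and then settle the two sign cases with Lemma~\ref{lem:fluxinvariance} together with $\tl_{d,q}^{-1}\sim\tl_{d,q^{-1}\smallsetminus d}$. The bookkeeping of exponents and the flux computation for $q^{-1}\smallsetminus d$ are handled as in the paper, so no further comment is needed.
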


\begin{proof}
Take the domino $d$ as in Lemma~\ref{lem:inversetilingplug}.
By Lemma~\ref{lem:onegenerator}, there exist plugs $p$ and $\tilde{p}$ with $|\flux_0(d,p)|=1=|\flux_0(d,\tilde{p})|$ such that $\tl_{d_1,p_1} ^{\pm 1} \sim \tl_{d,p}$ and $\tl_{d_2,p_2} ^{\pm 1} \sim \tl_{d,\tilde{p}}$.
The result now follows from Lemma~\ref{lem:fluxinvariance}.
Indeed, we have either $\tl_{d,p} \sim \tl_{d,\tilde{p}}$ or $\tl_{d,p} \sim \tl_{d,p^{-1}\smallsetminus d} ^{-1}\sim \tl_{d,\tilde{p}} ^{-1}$, depending on whether
$\flux_0(d,p) = \flux_0(d,\tilde{p})$ or $\flux_0(d,p)= -\flux_0(d,\tilde{p})$.
\end{proof}

The proof of Theorem~\ref{thm:regdisks} follows from combining the established results. 

\begin{proof}[Proof of Theorem~\ref{thm:regdisks}]
Since $\D$ is nontrivial, the twist $\twist \colon G_{\D}^+ \to \mathbb{Z}$ is a surjective homomorphism (see Lemma 6.2 of~\cite{Sal22}).
It follows from Lemma~\ref{lem:generatorsfluxgeq2} and Corollary~\ref{col:cyclic} that the even domino group $G_{\D}^+$ is cyclic.
Therefore, the twist is an isomorphism.    
\end{proof}

Our approach to prove Theorem~\ref{thm:bottleneck} relies on utilizing the established regularity of bottleneck-free hamiltonian disks.
To this end, we need an additional lemma concerning generators $\tl_{d,p}$ where $\D \smallsetminus d$ is not connected.

\begin{lemma}\label{lem:gendisc}
Consider a disk $\D$ satisfying the hypothesis of Theorem~\ref{thm:bottleneck}.
Let $d \in \D_{\gamma}$ be a domino that disconnects $\D$ and consider a plug $p \in \mathcal{P}_d$.
Then, $\tl_{d,p}$ is $\sim$-equivalent to a concatenation of tilings of the form $\tl_{\tilde{d},\tilde{p}} ^{\pm 1}$ with $\D \smallsetminus \tilde{d}$ connected.
\end{lemma}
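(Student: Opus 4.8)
The plan is to exploit the structure $\D = \bigcup_{i=0}^k \D_i$ explicitly. Since $d$ disconnects $\D$, the domino $d$ must separate one of the appended pieces $\D_i$ (for $i \geq 1$) from the rest: indeed, the only dominoes that disconnect $\D$ are those lying along one of the length-two segments $\D_i \cap \D_0$, because $\D_0$ itself is bottleneck-free. So I would first argue that, up to relabeling, $d$ is the domino $\D_{i_0} \cap \D_0$ for some $i_0 \in \{1,\dots,k\}$ (or at least that $d$ is contained in such a $\D_{i_0}$ and separates $\D_{i_0}$ from $\D \smallsetminus \D_{i_0}$). Writing $\D' = \D_{i_0}$ and $\D'' = \D \smallsetminus \D_{i_0,0}$ (which is again a disk of the type in Theorem~\ref{thm:bottleneck}, with one fewer appended piece), the cycle $\gamma$ induces hamiltonian cycles on both $\D'$ and $\D''$, with $d = s_k \cup s_l$ playing the role of the ``$s_1 \cup s_{|\cdot|}$'' boundary domino in each.

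Next I would invoke the structural facts from Section~\ref{sec:definitions}. By Lemma~\ref{lem:fluxinvariance} I may replace $p$ by any plug with the same $\flux_0(d,\cdot)$; in particular I can arrange that $p$ decomposes cleanly across the splitting, i.e.\ $p \cap \D_{d,0}$ sits inside $\D'$ (or inside $\D''$, whichever side $\D_{d,0}$ lies on) and the rest of $p$ sits on the other side. The key computation is then to decompose the tiling $\tl_{d,p} = \tl_p^{-1} * f * f_{\text{vert}} * \tl_{p\cup d}$ along the floors: using Fact~\ref{fact: floors} and Fact~\ref{fact:gen}, I want to write $\tl_{d,p}$ as a $\sim$-concatenation of tilings $\tl_{d', p'}$ where each $d'$ either respects the restricted hamiltonian cycle (hence is $\sim \tl_{\text{vert}}$ by Fact~\ref{fact:resptil} and Lemma~\ref{lem:tdvazio}) or is a domino in $\D'$ or in $\D''$. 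For the dominoes living strictly inside $\D'$ or strictly inside $\D''$, they do not disconnect $\D$ (a domino interior to one piece cannot be a global bottleneck, since removing it leaves each piece still connected through the other pieces), so those are already of the desired form $\tl_{\tilde d, \tilde p}^{\pm 1}$ with $\D \smallsetminus \tilde d$ connected.

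The remaining issue is the contribution supported on $d$ itself — the one domino $d \times [|p|,|p|+1]$ that genuinely does not respect $\gamma$. Here the point is that $d$, viewed inside the smaller disk $\D''$ (which still satisfies the hypotheses of Theorem~\ref{thm:bottleneck} but has strictly fewer bottleneck pieces, or inside $\D'$), can be handled by \emph{induction on $k$}: if $d$ no longer disconnects the smaller disk we are done directly, and if it still does, we recurse. The base case $k=0$ is Theorem~\ref{thm:regdisks}, where every $\tl_{d,p}$ with $d$ disconnecting simply does not occur. I expect the main obstacle to be the bookkeeping in the floor decomposition: tracking how the plugs $p \cap \D'$ and $p \cap \D''$ behave under the flips that separate the two sides, and verifying that the flux condition $|\D_i| < |\D_0| - 2$ is exactly what guarantees there is enough room to carry out the separating flips (this is the inequality whose tightness is flagged via Theorem 4 of~\cite{RaphaIrreg}). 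One must also be careful that the induced cycle on $\D''$ still starts at its southwesternmost corner, which may require re-choosing the base square via the relabeling freedom — but since $s_1 = s_{SW}$ lies in $\D_0$, and $\D''$ still contains $\D_0$, this is harmless.
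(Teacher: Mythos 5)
Your opening step --- reducing to the case where the disconnecting domino lies along one of the length-two segments $\D_0\cap\D_{i_0}$ --- matches the first half of the paper's proof (the paper replaces an arbitrary disconnecting $d\subset\D_i$ by the domino $d_1\subset\D_0$ containing that segment, using $|\D_i|<|\D_0|-2$ and Lemma~\ref{lem:fluxinvariance} to move the plug out of the way and then arguing as in Lemma~\ref{lem:tdvazio}). But the second half of your plan has a genuine gap: it never actually eliminates the generator attached to the disconnecting domino. If you decompose $\tl_{d,p}$ via Facts~\ref{fact: floors} and~\ref{fact:gen}, you get one factor $\tl_{d',p'}$ for each domino that fails to respect $\gamma$, and $d$ itself is one of those dominoes --- so the factor $\tl_{d,\cdot}$ with $\D\smallsetminus d$ disconnected survives, which is exactly what the lemma forbids. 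Your proposed fix, inducting on $k$ by viewing $d$ inside the smaller disk $\D''$, conflates the domino groups of $\D$ and of $\D''$: the conclusion requires $\D\smallsetminus\tilde d$ connected, and whether $d$ disconnects the sub-disk $\D''$ is irrelevant (indeed $d$ never disconnects $\D''$, since in $\D''$ it plays the role of the closing domino $s_1\cup s_{|\D''|}$, so your recursion terminates immediately while still leaving you with a generator whose domino disconnects $\D$). Equivalences proved in $G_{\D''}^+$ do not transfer to $G_\D^+$ without further argument.

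The missing idea is a local flip computation. The paper takes $d_2\subset\D_0$ adjacent and parallel to $d_1$ (this is where $|\D_i|<|\D_0|-2$ is really used: it guarantees, via Lemma~\ref{lem:fluxinvariance}, that the plug can be arranged with $d_2\subset p^{-1}$ while preserving flux), performs a vertical flip to create two copies of $d_2$ stacked over $d_1$, and then a horizontal flip rotating the pair $(d_1,d_2)$ into the perpendicular pair $(d_3,d_4)$. Facts~\ref{fact: floors} and~\ref{fact:gen} then give
\[
\tl_{d_1,p}\sim\tl_{d_3,p}*\tl_{d_4,p\cup d_3}*\tl_{d_2,p\cup d_1}^{-1},
\]
and each of $d_2,d_3,d_4$ either respects $\gamma$, does not disconnect $\D$, or carries zero flux (so Lemma~\ref{lem:tdvazio} kills it). This rotation is the mechanism that converts the disconnecting generator into non-disconnecting ones; without it, or some substitute for it, your decomposition cannot reach the stated conclusion.
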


\begin{proof}
We first distinguish four planar dominoes.
Suppose that $d \subset \D_i$ for some $i>0$, otherwise set $d_1=d$.
Let $d_1 \subset \D_0$ be the domino that contains the line segment of length two defined by $\D_0$ and $\D_i$.
Thus, $\D \smallsetminus d_1$ is not connected.
Let $d_2 \subset \D_0$ be the domino adjacent and parallel to $d_1$ and denote by $d_3$ and $d_4$ the dominoes obtained after performing a flip on $d_1$ and $d_2$.
Notice that $\D \smallsetminus d_2$ is connected, and at least one of the regions $\D \smallsetminus d_3$ or $\D \smallsetminus d_4$ is also connected.

Assume, without loss of generality, that the initial unit square of the hamiltonian cycle of $\D$ is contained in $\D_0$.
We then have $\D_{d_1,0} = \D_i$.
Notice that $\D_{d_1,0} \smallsetminus \D_{d,0}$ is a hamiltonian disk, as $d$ and $d_1$ disconnect $\D$.
Since $|\D_i| < |\D_0| -2$, we can modify $p$ to obtain a plug $p_1$ such that $p_1 \subset \D_{d,0} \cup \D_0 \smallsetminus d_1$, $d_2 \subset p_1^{-1}$ and $\flux_0(d,p)=\flux_0(d,p_1)$.
For an example of this construction, see Figure~\ref{fig:genbottleneck}.
It follows from Lemma~\ref{lem:fluxinvariance} that $\tl_{d,p} \sim \tl_{d,p_1}$.
Furthermore, as in the proof of Lemma~\ref{lem:tdvazio}, we have $\tl_{d,p_1} \sim \tl_{d_1,p_1}^{\pm 1}$.

\begin{figure}[H]
\centerline{
\includegraphics[width=0.55\textwidth]{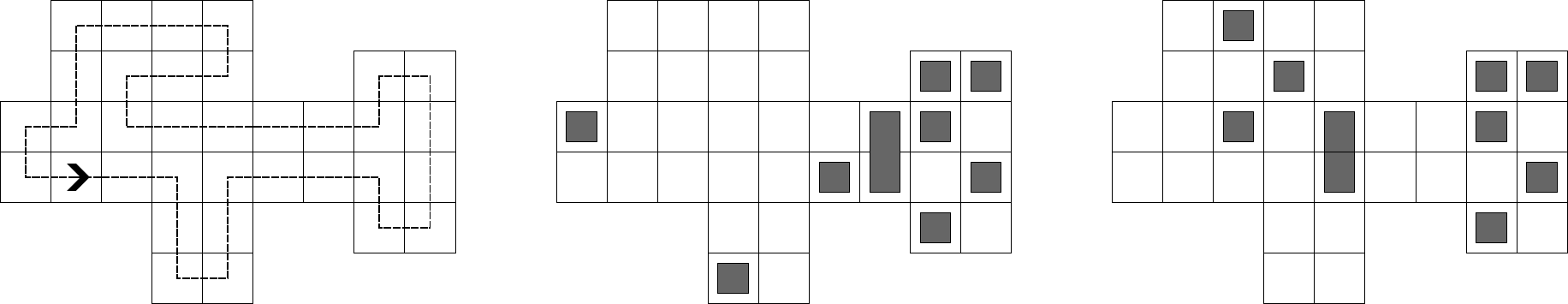}}
\caption{A hamiltonian disk $\D$ where $\D_0$ is equal to a $4\times 4$ square, a domino $d$ with a compatible plug $p$, and the domino $d_1$ with a plug $p_1$.}
\label{fig:genbottleneck}
\end{figure}

Since $d_2 \subset p_1^{-1}$, a vertical flip modifies $\tl_{d_1,p_1}$ to now contain the dominoes $d_2 \times [K,K+1]$ and $d_2 \times [K+1,K+2]$, where $K=|p_1|$.
A horizontal flip then takes $d_1 \times [K,K+1]$ and $d_2 \times [K,K+1]$ to $d_3 \times [K,K+1]$ and $d_4 \times [K,K+1]$.
Thus, Facts~\ref{fact: floors} and~\ref{fact:gen} imply that $\tl_{d_1,p} \sim \tl_{d_3, p} * \tl_{d_4, p\cup d_3} * \tl_{d_2, p\cup d_1}^{-1}$.

Therefore, the tiling $\tl_{d,p}$ is $\sim$-equivalent to a concatenation of tilings of the form $\tl_{\tilde{d}, \tilde{p}}^{\pm 1}$.
By Fact~\ref{fact:resptil}, we have $\tl_{\tilde{d},\tilde{p}}^{\pm 1} \sim \tl_{\text{vert}}$ if $\tilde{d} \not\in \D_{\gamma}$.
Otherwise, by construction, $\tilde{d}$ is either a domino that does not disconnect $\D$ or satisfies $\flux_0(\tilde{d},\tilde{p})=0$.
The result then follows from Lemma~\ref{lem:tdvazio}.
\end{proof}

\begin{proof}[Proof of Theorem~\ref{thm:bottleneck}]
Lemmas~\ref{lem:generatorsfluxgeq2},~\ref{lem:onegenerator} and~\ref{lem:gendisc} imply that the even domino group $G_\D^+$ is generated by tilings $\tl_{d,p}$ with $d \subset \D_0$ and $p \in \mathcal{P}_d$ such that $\D \smallsetminus d$ is connected and $|\flux_0(d,p)|=1$.
By Lemma~\ref{lem:fluxinvariance}, we may further restrict to plugs contained in $\D_0$.
We claim that each such tiling $\tl_{d,p}$ is equivalent under $\sim$ to a tiling of $\D \times [0,2N]$ whose restriction to $(\D \smallsetminus \D_0) \times [0,2N]$ is composed solely of vertical dominoes.
Notice that the desired result follows from the claim, since Theorem~\ref{thm:regdisks} implies that $\D_0$ is regular.

The hamiltonian cycle $\gamma$ of $\D$ induces a hamiltonian cycle $\gamma_0$ of $\D_0$.
The dominoes that respect $\gamma_0$ and does not respect $\gamma$ are exactly the dominoes $d_i \subset \D_0$ that contain the line segment defined by $\D_0$ and $\D_i$.
Let $\tl$ be the tiling of $\D \times [0,2N]$ obtained from the tiling $\tl_{d,p;\gamma_0}$ of $\D_0 \times [0,2N]$ by covering the region $(\D \smallsetminus \D_0) \times [0,2N]$ with vertical dominoes.
It follows from Facts~\ref{fact: floors} and~\ref{fact:gen} that $\tl$ is $\sim$-equivalent to a concatenation of $\tl_{d,p;\gamma}$ and possibly tilings $\tl_{d_i,p_i;\gamma}^{\pm 1}$ such that $\flux_0(d_i,p_i)=0$.
Thus, by Lemma~\ref{lem:tdvazio}, we have $\tl \sim \tl_{d,p;\gamma}$ and the claim follows.
\end{proof}

\bibliographystyle{plain}
\bibliography{bibliography}
\end{document}